\author[C.~Sanna]{Carlo Sanna$^\dagger$}
\thanks{$\dagger\,$C.~Sanna is a member of GNSAGA of INdAM and of CrypTO, the group of Cryptography and Number~Theory of Politecnico di Torino}
\address{\parbox{\linewidth}{
Politecnico di Torino, Department of Mathematical Sciences\\
Corso Duca degli Abruzzi 24, 10129 Torino, Italy\\[-8pt]}}
\email{carlo.sanna.dev@gmail.com}
\keywords{Lehmer sequence; linear recurrence; primitive divisor; residue}
\subjclass[2010]{Primary: 11B37, Secondary: 11B39, 11B50.}
\title{On the number of residues of linear recurrences}
\newtheorem{theorem}{Theorem}[section]
\newtheorem{proposition}{Proposition}[section]
\newtheorem{lemma}[theorem]{Lemma}
\theoremstyle{remark}
\newtheorem{remark}{Remark}[section]
\DeclareMathOperator{\ord}{ord}
\DeclareMathOperator{\Res}{Res}
\DeclareMathOperator{\s}{s}
\DeclareMathOperator{\sign}{sign}
\begin{document}

\begin{abstract}
For every nonconstant monic polynomial $g \in \mathbb{Z}[X]$, let $\mathfrak{M}(g)$ be the set of positive integers $m$ for which there exist an integer linear recurrence $(s_n)_{n \geq 0}$ having characteristic polynomial $g$ and a positive integer $M$ such that $(s_n)_{n \geq 0}$ has exactly $m$ distinct residues modulo $M$.
Dubickas and Novikas proved that $\mathfrak{M}(X^2 - X - 1) = \mathbb{N}$.
We study $\mathfrak{M}(g)$ in the case in which $g$ is divisible by a monic quadratic polynomial $f \in \mathbb{Z}[X]$ with roots $\alpha,\beta$ such that $\alpha\beta = \pm 1$ and $\alpha / \beta$ is not a root of unity.
We show that this problem is related to the existence of special primitive divisors of certain Lehmer sequences, and we deduce some consequences on $\mathfrak{M}(g)$.
In~particular, for $\alpha\beta = -1$, we prove that $m \in \mathfrak{M}(g)$ for every integer $m \geq 7$ with $m \neq 10$ and $4 \nmid m$.
\end{abstract}

\maketitle

\section{Introduction}

An integer sequence $\mathbf{s} = (s_n)_{n \geq 0}$ is a \emph{linear recurrence} if there exist $c_1, \dots, c_r \in \mathbb{Z}$ such that
\begin{equation}\label{equ:rec}
s_n = c_1 s_{n - 1} + c_2 s_{n - 2} + \cdots + c_r s_{n - r} ,
\end{equation}
for every integer $n \geq r$.
The values $s_0, \dots, s_{r-1}$ are the \emph{initial conditions} of $\mathbf{s}$, and
\begin{equation*}
g(X) = X^r - c_1 X^{r - 1} - c_2 X^{r - 2} - \cdots - c_r
\end{equation*}
is the \emph{characteristic polynomial} of $\mathbf{s}$.
Together they completely determine $\mathbf{s}$ via~\eqref{equ:rec}.
A classic example of linear recurrence is the sequence of \emph{Fibonacci numbers}, having initial conditions $0, 1$ and characteristic polynomial $X^2 - X - 1$.
It is easily seen that $\mathbf{s}$ is ultimately periodic modulo $M$, for every positive integer $M$, and purely periodic if $(c_r, M) = 1$.
Indeed, properties of linear recurrences modulo $M$ have been studied intensively, including: which residues modulo $M$ appear in the $\mathbf{s}$ and how frequently~\cite{MR369240, MR3298566, MR2024599, MR1119645, MR1117027, MR1393478}, and for which positive integers $M$ the linear recurrence $\mathbf{s}$ contains a complete system of residues modulo $M$~\cite{MR3068231, MR294238, MR951911, MR3696266}.

Let $\mathfrak{M}(g)$ denote the set of positive integers $m$ such that there exist initial conditions $s_0, \dots, s_{r - 1} \in \mathbb{Z}$ and a positive integer $M$ for which the linear recurrence $\mathbf{s}$ has exactly $m$ distinct residues modulo~$M$.
Dubickas and Novikas~\cite{MR4125906} proved that $\mathfrak{M}(X^2 - X - 1) = \mathbb{N}$ and stated that the problem of determining $\mathfrak{M}(g)$ ``may be very diﬃcult in general''.
The first step of their proof is a lemma regarding roots of $X^2 - X - 1$ modulo $p$ that have a prescribed multiplicative order~\cite[Lemma~3]{MR4125906}.
We provide below a straighforward generalization of it. (The proof is postponed to Section~\ref{sec:prel}).

\begin{lemma}\label{lem:criterion}
Let $f, g \in \mathbb{Z}[x]$ be nonconstant monic polynomials with $f \mid g$, let $m$ be a positive integer, and let $p$ be a prime number.
Suppose that:
\begin{enumerate}
\item\label{ite:a} There exists $a \in \mathbb{Z}$ such that $p \mid f(a)$ and $\ord_p(a) = m$.
\end{enumerate}
Then $m \in \mathfrak{M}(g)$.
\end{lemma}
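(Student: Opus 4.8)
The plan is to exhibit, for the given data, an explicit integer linear recurrence with characteristic polynomial $g$ whose reduction modulo $p$ is the sequence of powers of $a$, and then to take $M = p$. The starting observation is that since $f \mid g$ in $\mathbb{Z}[x]$ and $p \mid f(a)$, we also have $p \mid g(a)$; that is, $a$ is a root of $g$ modulo $p$. Writing $g(X) = X^r - c_1 X^{r-1} - \cdots - c_r$, the congruence $g(a) \equiv 0 \pmod p$ rearranges to $a^r \equiv c_1 a^{r-1} + \cdots + c_r \pmod p$, and multiplying through by $a^{\,n-r}$ shows that the sequence $(a^n)_{n \geq 0}$ satisfies the recurrence \eqref{equ:rec} modulo $p$.

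Next I would define the integer linear recurrence $\mathbf{s} = (s_n)_{n\geq 0}$ with characteristic polynomial $g$ by prescribing the integer initial conditions $s_i = a^i$ for $i = 0, \dots, r-1$ and extending via \eqref{equ:rec} over $\mathbb{Z}$. By construction $\mathbf{s}$ has characteristic polynomial $g$. A straightforward induction on $n$ then gives $s_n \equiv a^n \pmod p$ for every $n \geq 0$: the base cases $0 \leq n \leq r-1$ hold by the choice of initial conditions, and for $n \geq r$ both $s_n$ and $a^n$ obey the same recurrence \eqref{equ:rec} modulo $p$ with identical initial data, so they agree.

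Finally, since $\ord_p(a) = m$ forces $p \nmid a$, the residues $a^n \bmod p$ range exactly over the cyclic subgroup $\langle a \rangle \subseteq (\mathbb{Z}/p\mathbb{Z})^{\times}$, which has precisely the $m$ elements $1, a, \dots, a^{m-1}$. Hence the linear recurrence $\mathbf{s}$ takes exactly $m$ distinct residues modulo $M = p$, and therefore $m \in \mathfrak{M}(g)$.

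There is little genuine difficulty here; the only point that requires care is the verification that $(a^n)_{n\geq 0}$ satisfies the $g$-recurrence modulo $p$, which is exactly where the hypothesis $f \mid g$ (hence $g(a) \equiv 0 \pmod p$) enters. One should also keep in mind that $\ord_p(a)$ is meaningful only because $a$ is a unit modulo $p$, which is what guarantees that the powers of $a$ yield exactly $m$ distinct residues rather than fewer.
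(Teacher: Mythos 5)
Your proposal is correct and is essentially identical to the paper's own proof: both take the recurrence with characteristic polynomial $g$ and initial conditions $1, a, \dots, a^{r-1}$, show $s_n \equiv a^n \pmod p$ by induction using $p \mid f(a) \mid g(a)$, and conclude that $\mathbf{s}$ attains exactly the $m$ residues $1, a, \dots, a^{m-1}$ modulo $M = p$.
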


For $f = g = X^2 - X - 1$ and for positive integers $m$ belonging to certain residue classes modulo $40$, Dubickas and Novikas showed how to construct $a$ and $p$ satisfying~\ref{ite:a} by using primitive divisors of Lucas numbers~\cite[Lemma~5, Lemma~7]{MR4125906}.

Our first contribution is the next theorem, which shows that for more general quadratic polynomials $f$ the statement \ref{ite:a} is equivalent to $p$ being a particular primitive divisor of a certain term of a Lehmer sequence.

Let $\gamma, \delta$ be complex numbers such that $\gamma\delta$ and $(\gamma + \delta)^2$ are nonzero coprime integers and $\gamma / \delta$ is not a root of unity.
The \emph{Lehmer sequence} $(u_n(\gamma, \delta))_{n \geq 0}$ associated to $\gamma, \delta$ is defined by
\begin{equation*}
u_n(\gamma, \delta) := 
\begin{cases}
(\gamma^n - \delta^n) / (\gamma - \delta) & \text{ if $2 \nmid n$} , \\
(\gamma^n - \delta^n) / (\gamma^2 - \delta^2) & \text{ if $2 \mid n$} ,
\end{cases}
\end{equation*}
for all integers $n \geq 0$.
The conditions on $\gamma, \delta$ ensure that each $u_n(\gamma, \delta)$ is an integer.
A prime number $p$ is a \emph{primitive divisor} of $u_n(\gamma, \delta)$ if $p \mid u_n(\gamma, \delta)$ but $p \nmid (\gamma^2 - \delta^2)^2 u_1(\gamma, \delta) \cdots u_{n-1}(\gamma, \delta)$.

\begin{theorem}\label{thm:alphabeta}
Let $f \in \mathbb{Z}[X]$ be a monic quadratic polynomial with roots $\alpha, \beta$ such that \mbox{$\alpha \beta = \pm 1$} and $\alpha / \beta$ is not a root of unity.
Also, let $m$ be a positive integer and let $p$ be a prime number.
If $\alpha\beta = -1$ then put $\gamma := \alpha$, $\delta := -\beta$, and $n := m / (m, 2)$, while if $\alpha\beta = +1$ then put $\gamma := \alpha^{1/2}$, $\delta := \alpha^{-1/2}$, and $n := m$.
Then~\ref{ite:a} is equivalent to:
\begin{enumerate}
\setcounter{enumi}{1}
\item\label{ite:prim} $p$ is a primitive divisor of $u_n(\gamma, \delta)$ and $p \equiv 1 \pmod m$.
\end{enumerate}
Moreover, each of the following implies~\ref{ite:a} and~\ref{ite:prim}:
\begin{enumerate}
\setcounter{enumi}{2}
\item\label{ite:suff1} $\alpha\beta = -1$, $4 \nmid m$, $m \notin \{3, 6\}$, and $p$ is a primitive divisor of $u_{m/(m,2)}(\gamma, \delta)$.
\item\label{ite:suff2} $\alpha\beta = -1$, $8 \mid m$, $p$ is a primitive divisor of $u_{m / 2}(\gamma, \delta)$, and $p \equiv 1 \pmod 4$.
\item\label{ite:suff3} $\alpha\beta = +1$, $4 \mid m$, $p$ is a primitive divisor of $u_m(\gamma, \delta)$, and $p \equiv 1 \pmod 4$.
\end{enumerate}
\end{theorem}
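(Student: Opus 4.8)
The plan is to translate everything into the multiplicative structure of $\overline{\mathbb{F}_p}^\times$ and into the rank of apparition of $p$ in the Lehmer sequence. Fix a prime $\mathfrak{p}$ of $\overline{\mathbb{Q}}$ above $p$ and write $\overline{x}$ for the reduction modulo $\mathfrak{p}$. The first observation is that in \emph{both} cases $\gamma\delta = 1$, so $\delta = \gamma^{-1}$ and $\lambda := \gamma/\delta = \gamma^2$; concretely $\overline{\lambda} = \overline{\alpha}$ when $\alpha\beta = +1$ and $\overline{\lambda} = \overline{\alpha}^{\,2}$ when $\alpha\beta = -1$. Since an integer $a$ with $p \mid f(a)$ is the same as a root $\overline{a} \in \mathbb{F}_p$ of $f$, statement~\ref{ite:a} is equivalent to the existence of a root of $f$ in $\mathbb{F}_p$ of multiplicative order $m$; the two roots are $\overline{\alpha}, \overline{\beta}$ with $\overline{\alpha}\,\overline{\beta} = \overline{\alpha\beta} = \pm 1$.

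The key input is the rank of apparition, which I would extract from the closed forms $u_k = \gamma^{-(k-1)}(1 + \lambda + \cdots + \lambda^{k-1})$ for $k$ odd and $u_k = \gamma^{-(k-2)}(\lambda^k - 1)/(\lambda^2 - 1)$ for $k$ even, both valid because $\gamma\delta = 1$. As $\overline{\gamma}$ is a unit, reducing these shows that whenever $\ord(\overline{\lambda}) \geq 3$ one has $p \mid u_k$ if and only if $\ord(\overline{\lambda}) \mid k$, and in particular $p \nmid \gamma^2 - \delta^2$; hence $p$ is a primitive divisor of $u_n$ exactly when $\ord(\overline{\lambda}) = n$. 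Granting this, the equivalence $\ref{ite:a} \Leftrightarrow \ref{ite:prim}$ becomes order bookkeeping: from $\ord(\overline{\lambda}) = n$ one gets $\ord(\overline{\alpha}) \in \{n, 2n\}$ (in the case $\alpha\beta = -1$), the condition $p \equiv 1 \pmod{m}$ is incompatible with the inert case treated below and so forces the relevant root into $\mathbb{F}_p$ (an element of order dividing $p-1$ lies in $\mathbb{F}_p^\times$), pinning its order to exactly $m$; conversely a root of order $m$ gives $m \mid p-1$ and, after squaring, $\ord(\overline{\lambda}) = n$.

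The heart of the matter — and the step I expect to be the main obstacle — is the split/inert dichotomy required for \ref{ite:suff1}--\ref{ite:suff3}, where the hypothesis $p \equiv 1 \pmod{m}$ has been dropped and must be recovered. Here Frobenius computes a norm: if $f$ is inert at $p$ then $\overline{\alpha}^{\,p} = \overline{\beta}$, so $\overline{\alpha}^{\,p+1} = \overline{\alpha\beta} = \pm 1$. When $\alpha\beta = +1$ this reads $m = \ord(\overline{\alpha}) \mid p+1$, so $4 \mid m$ together with $p \equiv 1 \pmod 4$ is contradictory, settling \ref{ite:suff3}. When $\alpha\beta = -1$ it reads $\overline{\alpha}^{\,p+1} = -1$, whence $\ord(\overline{\alpha})$ is even and, the inert case forcing $\ord(\overline{\alpha}) = 2n$, the $2$-adic valuation $v_2$ satisfies $v_2(p+1) = v_2(n)$ with $4 \mid 2n$, i.e.\ $n$ is even. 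Thus $4 \nmid m$ makes $n$ odd and excludes the inert case outright (this is \ref{ite:suff1}), while $8 \mid m$ makes $v_2(n) \geq 2$, forcing $p \equiv 3 \pmod 4$ in the inert case against $p \equiv 1 \pmod 4$ (this is \ref{ite:suff2}). In each case ruling out the inert possibility means $f$ splits at $p$, and the order bookkeeping of the previous paragraph then produces a root of order exactly $m$, yielding~\ref{ite:a} and hence~\ref{ite:prim}.

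Finally I would dispatch the degenerate small cases separately, since the reduction above assumed $\ord(\overline{\lambda}) \geq 3$ and the norm argument assumed $p$ odd; the order computation can misbehave when the relevant term is tiny or when the two roots coincide, which is precisely what the exclusion $m \notin \{3,6\}$ (equivalently $n = 3$) guards against. I expect this to be a short but slightly tedious verification. The genuine content is thus the norm identity $\overline{\alpha}^{\,p+1} = \alpha\beta$ together with the $v_2$-argument, which is exactly the mechanism converting the purely arithmetic hypotheses on $m$ and on $p \bmod 4$ into the splitting of $f$ at $p$, and thereby into~\ref{ite:a}.
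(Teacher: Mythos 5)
Your proposal is correct in its essentials, and it takes a genuinely different route from the paper's proof. The paper never leaves the integer side: it encodes~\ref{ite:a} as ``$p \mid \Res(f,\Phi_m)$ and $p \equiv 1 \pmod m$'' (Lemma~\ref{lem:res}), uses the reflection identity of Lemma~\ref{lem:refle} to convert the resultants $\Res(f(\pm X),\Phi_m)$ into $\pm\Phi_n(\gamma,\delta)^{2e}$, and then applies Lemma~\ref{lem:lehmer}, whose key items \ref{ite:prim_equiv}, \ref{ite:prim_odd} and \ref{ite:prim_4} rest on cited theorems of Lehmer and Bilu--Hanrot--Voutier, in particular the congruence $p \equiv \big(\tfrac{(\gamma^2-\delta^2)^2}{p}\big) \pmod{r_p(\gamma,\delta)}$ of Lemma~\ref{lem:lehmer}\ref{ite:leg}, i.e.\ quadratic-residue arguments. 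You instead work inside $\overline{\mathbb{F}_p}$: exploiting that $\gamma\delta = 1$ in both cases, the closed forms of $u_k$ give the self-contained characterization that, for $n \geq 3$, $p$ is a primitive divisor of $u_n(\gamma,\delta)$ exactly when $\ord(\overline{\lambda}) = n$, where $\lambda := \gamma/\delta = \gamma^2$; the equivalence \ref{ite:a}$\,\Leftrightarrow\,$\ref{ite:prim} then reduces to order bookkeeping on the two roots $\overline{\alpha}$ and $\overline{\beta} = \pm\overline{\alpha}^{-1}$ (your switch from one root to the other is the exact analogue of the paper's step ``if $p \mid R_m^-$, pick $b$ with $p \mid f(-b)$ and use $ab \equiv 1 \pmod p$''); and \ref{ite:suff1}--\ref{ite:suff3} follow from the split/inert dichotomy, the Frobenius norm $\overline{\alpha}^{\,p+1} = \overline{\alpha\beta} = \pm 1$, and $2$-adic bookkeeping, in place of the paper's Legendre-symbol computations. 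Your route is more elementary and conceptually sharper --- it exposes the hypotheses on $p \bmod 4$ as exactly what rules out the inert case, and it reproves from scratch the special cases of the cited Lehmer-sequence results that are actually needed --- at the price of leaning on the special shape $\gamma\delta = 1$ (the paper's Lemma~\ref{lem:lehmer} holds for arbitrary Lehmer pairs) and of having to police degenerate cases by hand. Note that the ramified case you worry about needs no separate treatment: a primitive divisor satisfies $p \nmid (\gamma^2-\delta^2)^2$, and $(\alpha-\beta)^2$ divides $(\gamma^2-\delta^2)^2$ in both cases, so $f$ has simple roots modulo $p$.

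One warning about the small cases you defer: for $m \in \{1,2,4\}$, i.e.\ $n \leq 2$, the promised verification cannot succeed, because the equivalence \ref{ite:a}$\,\Leftrightarrow\,$\ref{ite:prim} is false there: $u_1 = u_2 = 1$ has no primitive divisor, so \ref{ite:prim} never holds, yet \ref{ite:a} can hold (take $f = X^2 - 3X - 1$, $p = 3$, $m = 1$, $a = 1$). This is not a defect of your method relative to the paper: the paper's own proof invokes Lemma~\ref{lem:lehmer}\ref{ite:prim_equiv}, which is stated only for $n \geq 3$, and so silently carries the same restriction. Both arguments establish the equivalence exactly for $m \notin \{1,2,4\}$; the implications \ref{ite:suff1}--\ref{ite:suff3} are unaffected, since their hypotheses force $n \geq 3$ (a primitive divisor of $u_n$ can exist only for such $n$).
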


As consequences of Theorem~\ref{thm:alphabeta}, Lemma~\ref{lem:criterion}, and results on the existence of primitive divisors of terms of Lehmer sequences (Lemma~\ref{lem:gammadelta1} and~\ref{lem:1mod4} below), we obtain the following results on $\mathfrak{M}(g)$.

\begin{theorem}\label{thm:odd}
Let $f, \alpha, \beta$ be as in Theorem~\ref{thm:alphabeta} with $\alpha\beta = -1$, and let $g \in \mathbb{Z}[X]$ be a monic polynomial with $f \mid g$.
Then $m \in \mathfrak{M}(g)$ for every integer $m \geq 7$, with $m \neq 10$ and $4 \nmid m$.
\end{theorem}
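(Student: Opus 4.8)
The plan is to reduce everything to the existence of one primitive divisor of a Lehmer sequence, via hypothesis~\ref{ite:suff1} of Theorem~\ref{thm:alphabeta} and the implication of Lemma~\ref{lem:criterion}. Fix an integer $m \geq 7$ with $4 \nmid m$ and $m \neq 10$, and, following Theorem~\ref{thm:alphabeta}, set $\gamma := \alpha$, $\delta := -\beta$, and $n := m/(m,2)$. Since $\alpha\beta = -1$, we get $\gamma\delta = -\alpha\beta = 1$, so $(u_k(\gamma,\delta))_{k\geq0}$ is a Lehmer sequence of the special type $\gamma\delta = 1$, and the nondegeneracy of $\gamma/\delta$ follows from that of $\alpha/\beta$. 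As $m \geq 7$, the excluded cases $m \in \{3,6\}$ of~\ref{ite:suff1} cannot occur, while $\alpha\beta = -1$ and $4 \nmid m$ hold by assumption; hence, once we find a prime $p$ that is a primitive divisor of $u_n(\gamma,\delta)$, hypothesis~\ref{ite:suff1} is satisfied, so~\ref{ite:a} holds, and Lemma~\ref{lem:criterion} gives $m \in \mathfrak{M}(g)$.

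It remains to produce such a $p$, and the key preliminary observation is a parity-and-size count for $n$. If $m$ is odd then $(m,2)=1$ and $n = m$ is odd with $n \geq 7$. If $m$ is even then $4 \nmid m$ forces $m \equiv 2 \pmod 4$, say $m = 2k$ with $k$ odd, so $(m,2)=2$ and $n = k$ is again odd; moreover $m \geq 7$ together with $m \neq 10$ leaves only $m \in \{14,18,22,\dots\}$, whence $n = m/2 \geq 7$. Thus in all admissible cases $n$ is odd and $n \geq 7$. This also explains the exclusion of $m = 10$: it is the unique value with $m \geq 7$ and $4 \nmid m$ for which $n = m/(m,2)$ drops below $7$ (there it equals $5$).

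With $n$ odd and $n \geq 7$ in hand, the required prime $p$ is supplied by the existence of primitive divisors of $u_n(\gamma,\delta)$ for such indices, which is exactly the content of the Lehmer-sequence existence result for the case $\gamma\delta = 1$ (Lemma~\ref{lem:gammadelta1}). This last input is where the real difficulty sits: one must guarantee a primitive divisor for \emph{every} odd $n \geq 7$, uniformly over all admissible $f$, equivalently over all Lehmer pairs with $\gamma\delta = 1$. The Bilu--Hanrot--Voutier theorem disposes of all $n > 30$ at once, so the work concentrates on the finitely many odd indices $7 \leq n \leq 30$, which must be matched against the classification of Lehmer numbers lacking a primitive divisor; the constraint $\gamma\delta = 1$ is what rules out the exceptional pairs for these $n$, leaving no surviving exception. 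I therefore expect this uniform small-index verification---rather than the formal reduction above---to be the main obstacle, and it is precisely what forces the threshold $m \geq 7$ (and the omission of $m = 10$).
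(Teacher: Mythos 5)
Your proposal is correct and follows essentially the same route as the paper: the paper's proof likewise observes that $m/(m,2) \notin \{1,2,3,4,5,6,10,12\}$ under the stated hypotheses, invokes Lemma~\ref{lem:gammadelta1} to obtain a primitive divisor of $u_{m/(m,2)}(\gamma,\delta)$, and concludes via the implication \ref{ite:suff1}$\Rightarrow$\ref{ite:a} of Theorem~\ref{thm:alphabeta} together with Lemma~\ref{lem:criterion}. Your additional parity analysis (showing $n$ is odd and $n \geq 7$, and that $m=10$ is the unique excluded value) is a slightly more explicit version of the same membership check, and your closing remark correctly locates the real difficulty in the proof of Lemma~\ref{lem:gammadelta1} itself, which both you and the paper use as a black box.
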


\begin{theorem}\label{thm:specialD0}
Let $f, \alpha, \beta$ be as in Theorem~\ref{thm:alphabeta} and let $g \in \mathbb{Z}[X]$ be a monic polynomial with $f \mid g$.
Write $(\alpha - \beta)^2 = D_0 D_1^2$, where $D_0, D_1 \in \mathbb{Z}$ and $D_0$ is squarefree.
Suppose that $D_0 \geq 5$ and $D_0 \equiv 1 \pmod 4$.
Then $m \in \mathfrak{M}(g)$ for every positive integer $m$ with $8D_0 \mid m$ if $\alpha\beta = -1$, and $4D_0 \mid m$ if $\alpha\beta = +1$.
\end{theorem}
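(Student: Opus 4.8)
The plan is to deduce Theorem~\ref{thm:specialD0} from Lemma~\ref{lem:criterion} together with the sufficient conditions of Theorem~\ref{thm:alphabeta}, concentrating all the difficulty in producing a primitive divisor $p$ of the appropriate Lehmer number that in addition satisfies $p \equiv 1 \pmod 4$. First I would split into the two cases. If $\alpha\beta = -1$ then $8D_0 \mid m$ forces $8 \mid m$, so by~\ref{ite:suff2} it suffices to find a primitive divisor $p \equiv 1 \pmod 4$ of $u_{m/2}(\gamma, \delta)$; if $\alpha\beta = +1$ then $4D_0 \mid m$ forces $4 \mid m$, so by~\ref{ite:suff3} it suffices to find a primitive divisor $p \equiv 1 \pmod 4$ of $u_{m}(\gamma, \delta)$. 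Setting $n := m/2$ in the first case and $n := m$ in the second, both cases collapse to the single assertion that $u_n(\gamma, \delta)$ has a primitive divisor $p$ with $p \equiv 1 \pmod 4$, and the divisibility hypotheses translate uniformly into $4D_0 \mid n$, with $D_0 \geq 5$ squarefree and $D_0 \equiv 1 \pmod 4$. Once this assertion is established, \ref{ite:a} holds by Theorem~\ref{thm:alphabeta}, and hence $m \in \mathfrak{M}(g)$ by Lemma~\ref{lem:criterion}.

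For the reduced assertion I would appeal to Lemma~\ref{lem:1mod4}, after checking its hypotheses. The structural facts I would record are that $\gamma\delta = 1$ and that $\gamma/\delta$ generates the quadratic field $\mathbb{Q}(\sqrt{D_0})$: for $\alpha\beta = -1$ one computes $\gamma/\delta = \alpha^2$, and for $\alpha\beta = +1$ one computes $\gamma/\delta = \alpha$, while in both cases $(\alpha - \beta)^2 = D_0 D_1^2$ gives $\mathbb{Q}(\alpha) = \mathbb{Q}(\sqrt{D_0})$, a genuine quadratic field since $D_0 \geq 5$. Existence of a primitive divisor $p$ of $u_n$ I would obtain from Lemma~\ref{lem:gammadelta1}, observing that $n \geq 4D_0 \geq 20$, so that only the finitely many exceptional Lehmer pairs of the Bilu--Hanrot--Voutier classification with $n \leq 30$ need to be excluded (in fact essentially only $n = 20$, arising from $D_0 = 5$). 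It then remains to force the congruence $p \equiv 1 \pmod 4$, which is the content underlying Lemma~\ref{lem:1mod4}.

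The heart of the matter is the apparition law. Since $p$ is a primitive divisor of $u_n$, the multiplicative order of $\gamma/\delta$ modulo a prime ideal above $p$ equals $n$. Because $\alpha\beta = \pm 1$, a direct computation gives $N(\gamma/\delta) = 1$, where $N$ is the norm of $\mathbb{Q}(\sqrt{D_0})/\mathbb{Q}$; hence $\gamma/\delta$ lies in $\mathbb{F}_p^{\times}$ when $p$ splits and in the cyclic norm-one subgroup of $\mathbb{F}_{p^2}^{\times}$ when $p$ is inert, so its order $n$ divides $p - \left(\frac{D_0}{p}\right)$. Thus $p \equiv \left(\frac{D_0}{p}\right) \pmod n$, and a fortiori modulo $D_0$. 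Writing $\epsilon := \left(\frac{D_0}{p}\right)$, quadratic reciprocity together with $D_0 \equiv 1 \pmod 4$ yields $\left(\frac{D_0}{p}\right) = \left(\frac{p}{D_0}\right) = \left(\frac{\epsilon}{D_0}\right)$, so that $\epsilon = \left(\frac{\epsilon}{D_0}\right)$. As $\left(\frac{-1}{D_0}\right) = 1$ for $D_0 \equiv 1 \pmod 4$, the value $\epsilon = -1$ is impossible, leaving $\epsilon = 1$. Therefore $p$ splits, $p \equiv 1 \pmod n$, and since $4 \mid n$ we conclude $p \equiv 1 \pmod 4$, as needed.

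The step I expect to be the main obstacle is this last paragraph: making the apparition law for Lehmer sequences fully rigorous, namely justifying both that the order of $\gamma/\delta$ modulo the chosen prime is exactly $n$ for a primitive divisor and the divisibility $n \mid p - \left(\frac{D_0}{p}\right)$, and then cleanly excluding the inert case by reciprocity. A secondary point requiring care is the existence step for the small values $n \leq 30$, where one must verify that no exceptional Lehmer pair compatible with $D_0 \equiv 1 \pmod 4$, $D_0 \geq 5$, and $4D_0 \mid n$ obstructs the appearance of a primitive divisor. Everything else is a routine translation between the sufficient conditions of Theorem~\ref{thm:alphabeta} and the hypotheses $8D_0 \mid m$ or $4D_0 \mid m$.
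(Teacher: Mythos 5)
Your proposal reaches the theorem correctly, and its outer skeleton (reduction to conditions \ref{ite:suff2}/\ref{ite:suff3} of Theorem~\ref{thm:alphabeta} plus Lemma~\ref{lem:criterion}, and existence of a primitive divisor from Lemma~\ref{lem:gammadelta1} since $n \geq 4D_0 \geq 20$) coincides with the paper's proof. Where you genuinely diverge is the crucial congruence $p \equiv 1 \pmod 4$: the paper invokes Lemma~\ref{lem:1mod4}, whose proof uses the Aurifeuillian factorization (Lemmas~\ref{lem:auri} and~\ref{lem:auri_coprime}) to write $\Phi_\ell(\gamma,\delta) = A^2 + B^2$ with $p \nmid A$ and $p \nmid B$, while you use the apparition law combined with quadratic reciprocity. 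Your route is sound and, in fact, needs no machinery beyond what the paper already has: the divisibility $n \mid p - \big(\tfrac{D_0}{p}\big)$ that you propose to derive by hand through prime ideals is precisely Lemma~\ref{lem:lehmer}\ref{ite:leg} applied with $r_p(\gamma,\delta) = n$, because $\big(\tfrac{(\gamma^2-\delta^2)^2}{p}\big) = \big(\tfrac{D_0}{p}\big)$ and the hypothesis $p \nmid 2\gamma\delta(\gamma+\delta)^2$ follows from primitivity; then $D_0 \mid n$, $D_0 \equiv 1 \pmod 4$, and Jacobi reciprocity force $\big(\tfrac{D_0}{p}\big) = \big(\tfrac{p}{D_0}\big) = \big(\tfrac{\pm 1}{D_0}\big) = 1$, whence $p \equiv 1 \pmod n$ and a fortiori $p \equiv 1 \pmod 4$. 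What each approach buys: yours is shorter, bypasses the entire Aurifeuillian apparatus, and yields the stronger conclusion $p \equiv 1 \pmod n$; the paper's approach isolates a self-contained statement about all odd primitive divisors (Lemma~\ref{lem:1mod4}) and showcases the sum-of-two-squares use of Aurifeuillian identities, which the author flags as a point of independent interest. Two loose ends you should tidy: (i) you need $p$ odd before any Legendre/Jacobi symbol or Lemma~\ref{lem:lehmer}\ref{ite:leg} is used; this follows from the definition of primitive divisor together with Lemma~\ref{lem:lehmer}\ref{ite:r2}, since $2 \nmid \gamma\delta(\gamma^2-\delta^2)^2$ would give $r_2(\gamma,\delta) = 3 < n$ (the paper handles this explicitly via \ref{ite:p2p} and \ref{ite:r2}); (ii) your aside about excluding exceptional Lehmer pairs for $n \leq 30$ is unnecessary and slightly misstates Lemma~\ref{lem:gammadelta1}, which already guarantees a primitive divisor for every $n \notin \{1,2,3,4,5,6,10,12\}$, so $n \geq 20$ requires no further table inspection.
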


Given two specific polynomials $f, g \in \mathbb{Z}[X]$ satisfying the hypothesis of Theorem~\ref{thm:odd} and Theorem~\ref{thm:specialD0}, one can try to determine $\mathfrak{M}(g)$ by using the aforementioned theorems and by employing~\cite[Lemma~6]{MR4125906}.
However, this requires a meticulous inspection of the numerical values of certain linear recurrences of characteristic polynomial $g$, and a detailed case-by-case analysis, as the one done by Dubickas and Novikas for $\mathfrak{M}(X^2 - X - 1)$~\cite[Sections~6--8]{MR4125906}.

\begin{remark}
It should be possible to provide an equivalent version of Theorem~\ref{thm:alphabeta} in terms of primitive divisors of the \emph{Lehmer--Pierce sequence} $(\Delta_n(\alpha, \beta))_{n \geq 0}$~\cite{MR2468478, MR1503584, MR1503118}, which is defined by $\Delta_n(\alpha, \beta) := (\alpha^n - 1)(\beta^n - 1)$ for every integer $n \geq 0$.
\end{remark}

\section{Notation}

For every integer $a$ and for each prime number $p$, we let $\ord_p(a)$ denote the multiplicative order of $a$ modulo $p$, with the implicit condition that $p \nmid a$.
Also, when $p$ is odd, we write $\big(\tfrac{a}{p}\big)$ for the Legendre symbol.
For algebraic integers $\zeta$ and $\eta$, the notation $\zeta \equiv \eta \pmod p$ means that $p$ divides $\zeta - \eta$, that is, $(\zeta - \eta) / p$ is an algebraic integer.
For every positive integer~$n$, we let $\varphi(n)$ be the Euler totient function of $n$.
Furthermore, we write $\Phi_n(X)$ for the $n$th cyclotomic polynomial, and $\Phi_n(X, Y) := \Phi_n(X/Y)Y^{\varphi(n)}$ for its homogenization.
Given two monic polynomials $f, g \in \mathbb{Z}[X]$, we let $\Res(f, g)$ denote their resultant.

\section{Preliminaries}\label{sec:prel}

We begin by proving Lemma~\ref{lem:criterion}.

\begin{proof}[Proof of Lemma~\ref{lem:criterion}]
Put $r := \deg(g)$ and let $\mathbf{s} = (s_n)_{n \geq 0}$ be the linear recurrence with initial conditions $1, a, \dots, a^{r-1}$ and characteristic polynomial $g$.
We shall prove that $s_n \equiv a^n \pmod p$ for every integer $n \geq 0$.
In~turn, since $\ord_p(a) = m$, this implies that $\mathbf{s}$ has exactly $m$ distinct residues modulo $p$, namely $1, a, \dots, a^{m-1} \!\!\pmod p$, and consequently $m \in \mathfrak{M}(g)$.
Let us proceed by induction on $n$.
For $n = 0,\dots,r-1$ the claim is obvious because of the initial conditions of $\mathbf{s}$.
Assuming that the claim is true for every nonnegative integer less than $n$, let us prove it for $n$.
From~\eqref{equ:rec} and the induction hypothesis, we have that
\begin{align*}
s_{n} &\equiv c_1 s_{n - 1} + c_2 s_{n - 2} + \cdots + c_r s_{n - r} \equiv c_1 a^{n - 1} + c_2 a^{n - 2} + \cdots + c_r a^{n - r} \\
    &\equiv a^{n - r} (a^r - g(a)) \equiv a^n \pmod p ,
\end{align*}
because $p \mid f(a) \mid g(a)$.
\end{proof}

The next result is a simple equivalence for~\ref{ite:a}.

\begin{lemma}\label{lem:res}
Let $f \in \mathbb{Z}[X]$ be a nonconstant monic polynomial, let $m$ be a positive integer, and let $p$ be a prime number.
Then \ref{ite:a} is equivalent to $p \mid \Res(f, \Phi_m)$ and $p \equiv 1 \pmod m$.
\end{lemma}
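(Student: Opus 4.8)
The plan is to translate the order condition in~\ref{ite:a} into a statement about the cyclotomic polynomial $\Phi_m$ over $\mathbb{F}_p$, and then invoke the standard characterization of the resultant as a detector of common roots. The guiding observation is that, whenever $p \nmid m$, the reduction $\overline{\Phi_m}$ of $\Phi_m$ modulo $p$ is separable and its roots in $\overline{\mathbb{F}_p}$ are precisely the elements of multiplicative order $m$; in particular, an integer $a$ with $p \nmid a$ satisfies $\ord_p(a) = m$ if and only if $\overline{\Phi_m}(a) \equiv 0 \pmod p$. Moreover, since the order of any element of $\mathbb{F}_p^{\times}$ divides $p - 1$, the existence of an element of order $m$ is equivalent to $m \mid p - 1$, that is, to $p \equiv 1 \pmod m$, which in turn forces $p \nmid m$.

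For the forward implication, I would suppose that~\ref{ite:a} holds. Then $\ord_p(a) = m$ immediately yields $m \mid p - 1$, hence $p \equiv 1 \pmod m$, and by the observation above $\overline{\Phi_m}(a) \equiv 0 \pmod p$. Since $p \mid f(a)$ as well, the reductions $\bar f$ and $\overline{\Phi_m}$ share the common root $a \bmod p$ in $\mathbb{F}_p$. Because $f$ and $\Phi_m$ are monic, reduction modulo $p$ preserves their degrees, so the Sylvester determinant reduces accordingly and $\Res(f, \Phi_m) \equiv \Res(\bar f, \overline{\Phi_m}) \pmod p$; the right-hand side vanishes in $\mathbb{F}_p$ precisely because the two reductions have a common root. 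Hence $p \mid \Res(f, \Phi_m)$.

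For the reverse implication, I would assume $p \mid \Res(f, \Phi_m)$ and $p \equiv 1 \pmod m$. The congruence gives $m \mid p - 1$, hence $p \nmid m$, so $\overline{\Phi_m}$ splits into distinct linear factors over $\mathbb{F}_p$, each root being an element of order exactly $m$. From $\Res(\bar f, \overline{\Phi_m}) \equiv 0 \pmod p$ one concludes that $\bar f$ and $\overline{\Phi_m}$ have a common root in $\overline{\mathbb{F}_p}$; but every root of $\overline{\Phi_m}$ already lies in $\mathbb{F}_p$, so this common root is some $\bar a \in \mathbb{F}_p$ of order $m$. Lifting $\bar a$ to an integer $a$ then gives $p \mid f(a)$ and $\ord_p(a) = m$, which is~\ref{ite:a}.

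The step I expect to be the crux is this last descent: a priori, $p \mid \Res(f, \Phi_m)$ only furnishes a common root of $\bar f$ and $\overline{\Phi_m}$ in the algebraic closure $\overline{\mathbb{F}_p}$, whereas~\ref{ite:a} demands an honest integer $a$, i.e.\ a common root in $\mathbb{F}_p$ itself. The hypothesis $p \equiv 1 \pmod m$ is exactly what rescues the argument, since it forces $\Phi_m$ to split completely over $\mathbb{F}_p$; without it the resultant could be divisible by $p$ with the shared root living in a proper extension, and the equivalence would break down. The remaining ingredients---that reduction commutes with the resultant for monic polynomials, and that over a field the resultant vanishes if and only if there is a common root---are standard and require no new ideas.
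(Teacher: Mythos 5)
Your proof is correct and follows essentially the same route as the paper's: the forward direction observes that $\ord_p(a)=m$ makes $a$ a primitive $m$th root of unity modulo $p$, hence a common root of $f$ and $\Phi_m$ modulo $p$, forcing $p \mid \Res(f,\Phi_m)$; the reverse direction uses $p \equiv 1 \pmod m$ to make $\Phi_m$ split completely over $\mathbb{F}_p$, so the common root detected by the resultant lies in $\mathbb{F}_p$ and lifts to the required integer $a$. You merely make explicit the standard facts (compatibility of the resultant with reduction for monic polynomials, separability of $\Phi_m$ modulo $p$ when $p \nmid m$) that the paper leaves implicit.
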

\begin{proof}
On the one hand, if~\ref{ite:a} holds then $a$ is a primitive $m$th root of unity modulo $p$.
Hence, $p \equiv 1 \pmod m$ and $a$ is a root of $\Phi_m$ modulo $p$.
Since $p \mid f(a)$, we have that $a$ is a common root of $f$ and $\Phi_m$ modulo $p$, and consequently $p \mid \Res(f, \Phi_m)$.
On the other hand, if $p \mid \Res(f, \Phi_m)$ and $p \equiv 1 \pmod m$ then $\Phi_m$ splits completely modulo $p$ and it has a common root with $f$, thus~\ref{ite:a} follows.
\end{proof}

We need some results on Lehmer sequences and related values of cyclotomic polynomials.
It~is known that a prime number $p$ divides some term of a Lehmer sequence $(u_n(\gamma, \delta))_{n \geq 0}$ if and only if $p \nmid \gamma\delta$.
In such a case, let $r_p(\gamma, \delta)$ be the \emph{rank of appearance} of $p$, that is, the smallest positive integer $k$ such that $p \mid u_k(\gamma, \delta)$.
Furthermore, it can be proved that $\Phi_n(\gamma, \delta) \in \mathbb{Z}$ for every integer $n \geq 3$ (for these facts see, e.g.,~\cite{MR491445}).

\begin{lemma}\label{lem:lehmer}
Let $(u_k(\gamma, \delta))_{k \geq 0}$ be a Lehmer sequence, let $p$ be a prime number, and let $n \geq 3$ be an integer.
Then we have the following:
\begin{enumerate}[label=(p{\small\arabic*})]
\item\label{ite:div} $p \mid u_n(\gamma, \delta)$ if and only if $p \nmid \gamma\delta$ and $r_p(\gamma, \delta) \mid n$.
\item\label{ite:p2p} If $p \nmid \gamma\delta$ and $p \mid (\gamma^2 - \delta^2)^2$ then $r_p(\gamma, \delta) \in \{p, 2p\}$.
\item\label{ite:r2} If $2 \nmid \gamma\delta(\gamma^2 - \delta^2)^2$ then $r_2(\gamma, \delta) = 3$.
\item\label{ite:leg} If $p \nmid 2\gamma\delta(\gamma + \delta)^2$ then $p \equiv \Big(\tfrac{(\gamma^2 - \delta^2)^2}{p}\Big) \pmod {r_p(\gamma, \delta)}$.
\item\label{ite:Phin_factor} If $p \mid \Phi_n(\gamma, \delta)$ then $p \nmid \gamma\delta$ and $n = r_p(\gamma, \delta) p^v$ for some integer $v \geq 0$.
\item\label{ite:prim_equiv} $p$ is a primitive divisor of $u_n(\gamma, \delta)$ if and only if $p \mid \Phi_n(\gamma, \delta)$ and $p \equiv \pm 1 \pmod n$.
\item\label{ite:prim_odd} If $n \geq 5$, $2 \nmid n$, and $p$ is a primitive divisor of $u_n(\gamma, \delta)$ then $p \equiv \Big(\tfrac{\gamma\delta(\gamma - \delta)^2}{p}\Big) \pmod {2n}$.
\item\label{ite:prim_4} If $4 \mid n$, $\gamma\delta = 1$, $\gamma - \delta \in \mathbb{Z}$, $p$ is a primitive divisor of $u_n(\gamma, \delta)$, and $p \equiv 1 \pmod 4$ then $p \equiv 1 \pmod {2n}$.
\end{enumerate}
\end{lemma}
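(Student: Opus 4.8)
The plan is to translate every statement into the arithmetic of $\gamma$ and $\delta$ modulo prime ideals of $K:=\mathbb{Q}(\gamma,\delta)$. Put $Q:=\gamma\delta\in\mathbb{Z}$ and $R:=(\gamma+\delta)^2\in\mathbb{Z}$, so that $(\gamma-\delta)^2=R-4Q$, $(\gamma^2-\delta^2)^2=R(R-4Q)$, and $\gcd(Q,R)=1$. Since $\gamma$ is a root of $X^4-(R-2Q)X^2+Q^2$, both $\gamma,\delta$ are algebraic integers, so I fix a prime $\mathfrak p\mid p$ of $\mathcal O_K$ and reduce modulo $\mathfrak p$. I will use throughout the factorization $\gamma^k-\delta^k=\prod_{d\mid k}\Phi_d(\gamma,\delta)$, which gives $u_k(\gamma,\delta)=\prod_{d\mid k,\;d\notin\{1,2\}}\Phi_d(\gamma,\delta)$ for $k\ge 3$, and the element $\omega:=\gamma/\delta=\gamma^2/Q$, for which $\omega^k\equiv 1\pmod{\mathfrak p}$ is equivalent to $p\mid\gamma^k-\delta^k$; in particular $\ord_{\mathfrak p}(\omega)=r_p(\gamma,\delta)$ whenever $p\nmid(\gamma^2-\delta^2)^2$.

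Parts \ref{ite:div} and \ref{ite:r2} are the elementary base. For \ref{ite:div} I would invoke the classical law of apparition for Lehmer sequences \cite{MR491445}, by which the indices $k$ with $p\mid u_k$ are precisely the multiples of $r_p$. For \ref{ite:r2} a direct computation suffices: $u_1=u_2=1$ and $u_3=R-Q$, which is even under the stated parity hypothesis, so $r_2=3$. For \ref{ite:p2p}, reducing modulo $\mathfrak p$, the hypothesis $p\mid R(R-4Q)$ forces $\gamma\equiv\delta$ or $\gamma\equiv-\delta\pmod{\mathfrak p}$ (not both, since $\gcd(Q,R)=1$ and $p\nmid Q$); expanding $u_k$ as a symmetric sum then gives $u_k\equiv(\text{unit})\cdot k$ or $(\text{unit})\cdot\tfrac k2\pmod{\mathfrak p}$, whence $r_p=p$ when $p\mid(\gamma-\delta)^2$ and $r_p=2p$ when $p\mid(\gamma+\delta)^2$, the prime $p=2$ being checked by hand.

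The engine for \ref{ite:leg}--\ref{ite:prim_4} is Frobenius. For odd $p\nmid 2QR$, Euler's criterion applied to the squares $R$ and $(\gamma-\delta)^2$ gives $(\gamma+\delta)^p\equiv\left(\frac{R}{p}\right)(\gamma+\delta)$ and $(\gamma-\delta)^p\equiv\left(\frac{(\gamma-\delta)^2}{p}\right)(\gamma-\delta)\pmod{\mathfrak p}$; adding and subtracting these (the $p$-th power map being additive modulo $\mathfrak p$) yields $\gamma^p\equiv\left(\frac Rp\right)\gamma$ or $\left(\frac Rp\right)\delta$, i.e.\ $\omega^p\equiv\omega^{\pm 1}$, with sign $+$ exactly when $\left(\frac{(\gamma^2-\delta^2)^2}{p}\right)=1$. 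Since $\ord_{\mathfrak p}(\omega)=r_p$, this is \ref{ite:leg} (the degenerate case $p\mid(\gamma-\delta)^2$ being supplied by \ref{ite:p2p}, which there forces $r_p=p$). For \ref{ite:Phin_factor}, $p\mid\Phi_n(\gamma,\delta)$ forces $p\nmid\gamma\delta$ (otherwise $\Phi_n(\gamma,\delta)\equiv\gamma^{\varphi(n)}$ or $\delta^{\varphi(n)}\not\equiv 0\pmod{\mathfrak p}$), after which the standard description of the prime divisors of a homogeneous cyclotomic value gives $n=r_pp^v$ \cite{MR491445}. Then \ref{ite:prim_equiv} follows: if $p$ is primitive then $r_p=n$, so $\omega$ has order $n$ and, by the above, $\omega^p\equiv\omega^{\pm1}$, giving $p\equiv\pm1\pmod n$, while $p\mid\Phi_n(\gamma,\delta)$ because $p$ divides $u_n$ but none of its other cyclotomic factors; conversely \ref{ite:Phin_factor} gives $n=r_pp^v$, and $p\equiv\pm1\pmod n$ forces $v=0$ and, via \ref{ite:p2p}, $p\nmid(\gamma^2-\delta^2)^2$.

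Finally, \ref{ite:prim_odd} and \ref{ite:prim_4} refine \ref{ite:leg} from modulus $r_p=n$ to modulus $2n$. For \ref{ite:prim_odd}, computing $\gamma^{p-1}=(Q\omega)^{(p-1)/2}=\left(\frac Qp\right)\omega^{(p-1)/2}$ in two ways and using that $n$ is odd (so $\omega^{(p-1)/2}\in\{1,\omega^{-1}\}$) yields $\left(\frac Qp\right)=\left(\frac Rp\right)$, hence $\left(\frac{(\gamma^2-\delta^2)^2}{p}\right)=\left(\frac{\gamma\delta(\gamma-\delta)^2}{p}\right)$; as this value and $p$ are both odd, the Chinese Remainder Theorem lifts the congruence from $n$ to $2n$. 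For \ref{ite:prim_4}, here $(\gamma-\delta)^2$ is a perfect square, so \ref{ite:leg} reads $p\equiv\left(\frac Rp\right)\pmod n$; if $\left(\frac Rp\right)=-1$ then $p\equiv-1\pmod n$ and, since $4\mid n$, $p\equiv 3\pmod 4$, contradicting $p\equiv1\pmod 4$. Thus $\left(\frac Rp\right)=1$, so $\sqrt R\in\mathbb{F}_p$ and $\gamma=\tfrac12\!\left((\gamma-\delta)+\sqrt R\right)\in\mathbb{F}_p$; since $\gamma^2=\omega$ has order $n$ with $n$ even, $\gamma$ has order $2n$, so $2n\mid p-1$. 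I expect the main obstacle to be exactly this sign bookkeeping in the Frobenius step---determining $\left(\frac Qp\right)$ and the precise power $\omega^{(p-1)/2}$---together with the uniform handling of the degenerate primes $p=2$ and $p\mid(\gamma-\delta)^2$.
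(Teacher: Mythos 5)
Your proposal is correct in substance, and its skeleton matches the paper's: parts (p1)--(p5) serve as the foundation, (p6) follows from the cyclotomic factorization of $u_n$ combined with (p2)--(p5), and (p7), (p8) are congruence refinements of (p4). The genuine difference is the engine. The paper simply cites Bilu--Hanrot--Voutier for (p1), (p2), (p3), (p5) and Lehmer's theorem for (p4); then, for (p7), it obtains the key symbol identity $\bigl(\tfrac{(\gamma+\delta)^2}{p}\bigr)=\bigl(\tfrac{\gamma\delta}{p}\bigr)$ from the integer identity $(\gamma+\delta)^2v_n^2-(\gamma-\delta)^2u_n^2=4(\gamma\delta)^n$ for the companion sequence $v_n=(\gamma^n+\delta^n)/(\gamma+\delta)$, an argument that stays entirely inside $\mathbb{Z}$; and for (p8) it notes that $(\gamma+\delta)^2$ being a quadratic residue makes $X^2-(\gamma-\delta)X-\gamma\delta$ split modulo $p$, hence $\gamma^{p-1}\equiv 1$, hence $p\mid u_{(p-1)/2}$, and (p1) finishes. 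You instead work modulo a prime $\mathfrak{p}\mid p$ of $\mathbb{Q}(\gamma,\delta)$ and run everything through a single Frobenius computation ($\gamma^p\equiv\bigl(\tfrac Rp\bigr)\gamma$ or $\bigl(\tfrac Rp\bigr)\delta$ according to whether the symbols of $R$ and $R-4Q$ agree), which yields (p4) directly, the sign $p\equiv\pm1\pmod n$ in (p6), and---by computing $\gamma^{p-1}$ a second way via Euler's criterion---the symbol identity $\bigl(\tfrac Qp\bigr)=\bigl(\tfrac Rp\bigr)$ needed in (p7); your direct symmetric-sum proofs of (p2), (p3) are also sound. This route is more self-contained (you reprove what the paper cites) and makes the source of the congruences transparent, at the price of bookkeeping at degenerate primes. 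Two such points should be made explicit: in the forward direction of (p6) your Frobenius step assumes $p$ odd, so $p=2$ must be dispatched by (p3) exactly as in the paper ($r_2=3=n$ and $2\equiv-1\pmod 3$); and in (p7) you need $p>2$, which again follows from (p3) and $n\geq 5$. Both are one-line repairs, and you flagged the issue yourself in your closing sentence.
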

\begin{proof}
Properties~\ref{ite:div}, \ref{ite:p2p}, and~\ref{ite:r2} follow from~\cite[Corollary~2.2]{MR1863855}, \ref{ite:leg} is~\cite[Theorem~1.9]{MR1502953}, and~\ref{ite:Phin_factor} is~\cite[Proposition~2.3]{MR1863855}.

Let us prove~\ref{ite:prim_equiv}.
On the one hand, if $p$ is a primitive divisor of $u_n(\gamma, \delta)$ then $p \mid u_n(\gamma, \delta)$ but $p \nmid (\gamma^2 - \delta^2)^2 u_1(\gamma, \delta) \cdots u_{n - 1}(\gamma, \delta)$.
Since for every positive integer $k$ we have that
\begin{equation*}
u_k(\gamma, \delta) = 
\begin{cases}
\prod_{d \,\mid\, k,\, d \,>\, 1} \Phi_d(\gamma, \delta) & \text{ if $2 \nmid k$}, \\
\prod_{d \,\mid\, k,\, d \,>\, 2} \Phi_d(\gamma, \delta) & \text{ if $2 \mid k$}, 
\end{cases}
\end{equation*}
it follows that $p \mid \Phi_n(\gamma, \delta)$.
Moreover, $n = r_p(\gamma, \delta)$ and, also by \ref{ite:div}, $p \nmid \gamma\delta(\gamma^2 - \delta^2)^2$.
Hence, from~\ref{ite:r2} and~\ref{ite:leg} we get that $p \equiv \pm 1 \pmod n$.
On the other hand, if $p \mid \Phi_n(\gamma, \delta)$ and $p \equiv \pm 1 \pmod n$ then from~\ref{ite:Phin_factor} we get that $p \nmid \gamma\delta$ and $n = r_p(\gamma, \delta)$.
Hence, $p \mid u_n(\gamma, \delta)$ but $p \nmid u_1(\gamma, \delta) \cdots u_{n - 1}(\gamma, \delta)$.
Also,~\ref{ite:p2p} yields that $p \nmid (\gamma^2 - \delta^2)^2$.
Hence, $p$ is a primitive divisor of $u_n(\gamma, \delta)$.

Now let us prove~\ref{ite:prim_odd}.
Since $p$ is a primitive divisor of $u_n(\gamma, \delta)$, we have that $n = r_p(\gamma, \delta)$ and, also by~\ref{ite:div}, $p \nmid \gamma\delta(\gamma^2 - \delta^2)^2$.
By $n \geq 5$ and~\ref{ite:r2}, we get that $p > 2$.
Since $2 \nmid n$, we have that $v_n(\gamma, \delta) := (\gamma^n + \delta^n) / (\gamma + \delta)$ is an integer.
Moreover, from $p \mid u_n(\gamma, \delta)$ and the identity
\begin{equation*}
(\gamma + \delta)^2 v_n(\gamma, \delta)^2 - (\gamma - \delta)^2 u_n(\gamma, \delta)^2 = 4 (\gamma\delta)^n ,
\end{equation*}
it follows that $(\gamma + \delta)^2 v_n(\gamma, \delta)^2 \equiv 2^2 (\gamma\delta)^n \pmod p$.
Hence, $\Big(\tfrac{(\gamma + \delta)^2}{p}\Big) = \Big(\tfrac{\gamma\delta}{p}\Big)$ and consequently $\Big(\tfrac{(\gamma^2 - \delta^2)^2}{p}\Big) = \Big(\tfrac{\gamma\delta(\gamma - \delta)^2}{p}\Big)$.
Then by~\ref{ite:leg} we obtain that $p \equiv \Big(\tfrac{\gamma\delta(\gamma - \delta)^2}{p}\Big) \pmod n$.
Recalling that $p$ and $n$ are both odd, it follows that $p \equiv \Big(\tfrac{\gamma\delta(\gamma - \delta)^2}{p}\Big) \pmod {2n}$.

It remains to prove~\ref{ite:prim_4}.
Since $p$ is a primitive divisor of $u_n(\gamma, \delta)$, we have that $n = r_p(\gamma, \delta)$ and, also by~\ref{ite:div}, $p \nmid \gamma\delta(\gamma^2 - \delta^2)^2$.
From $4 \mid n$, $p \equiv 1 \pmod 4$, and~\ref{ite:leg}, it follows that $\Big(\tfrac{(\gamma^2 - \delta^2)^2}{p}\Big) = 1$.
Also, $(\gamma^2 - \delta^2)^2 = (\gamma - \delta)^2 (\gamma + \delta)^2$ and $\gamma - \delta$ is an integer.
Hence, $\Big(\tfrac{(\gamma + \delta)^2}{p}\Big) = 1$.
Noting that $(\gamma + \delta)^2$ is the discriminant of $(X - \gamma)(X + \delta) = X^2 - (\gamma - \delta)X - \gamma\delta \in \mathbb{Z}[X]$, one gets that $\gamma^{p-1} \equiv 1 \pmod p$.
Multiplying both sides by $\delta^{(p-1)/2}$, and recalling that $\gamma\delta = 1$, it follows that $\gamma^{(p-1)/2} \equiv \delta^{(p-1)/2} \pmod p$, and so $p \mid u_{(p-1)/2}(\gamma, \delta)$.
Consequently, by~\ref{ite:div}, we have that $n \mid (p - 1) / 2$, that is, $p \equiv 1 \pmod {2n}$.
\end{proof}

We also need the following identity for a product of cyclotomic polynomials.

\begin{lemma}\label{lem:refle}
For every positive integer $m$, we have
\begin{equation*}
\Phi_m(X) \Phi_m(-X) = (-1)^{\varphi(m)} \Phi_{m / \!\!\;(m, 2)}\!\big(X^2\big)^e ,
\end{equation*}
where $e := 1$ if $4 \nmid m$, and $e := 2$ if $4 \mid m$.
\end{lemma}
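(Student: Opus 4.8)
The plan is to work over $\mathbb{C}$ and compare both sides as products over roots of unity. Writing $\Phi_m(X) = \prod_\zeta (X - \zeta)$, where $\zeta$ ranges over the primitive $m$th roots of unity, I would first compute
\[
\Phi_m(-X) = \prod_\zeta(-X-\zeta) = (-1)^{\varphi(m)}\prod_\zeta(X+\zeta),
\]
so that
\[
\Phi_m(X)\,\Phi_m(-X) = (-1)^{\varphi(m)}\prod_\zeta(X-\zeta)(X+\zeta) = (-1)^{\varphi(m)}\prod_\zeta\big(X^2 - \zeta^2\big).
\]
This already accounts for the sign $(-1)^{\varphi(m)}$ and reduces the claim to the single-variable identity $\prod_\zeta(Y - \zeta^2) = \Phi_{m/(m,2)}(Y)^e$ in the variable $Y := X^2$.

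Next I would analyze the squaring map $\zeta \mapsto \zeta^2$ on the set of primitive $m$th roots of unity. Since a primitive $m$th root $\zeta$ has $\zeta^2$ of multiplicative order $m/(m,2)$, every factor $Y - \zeta^2$ on the left is a factor of $\Phi_{m/(m,2)}(Y)$; the content of the identity is therefore purely combinatorial, namely counting how many times each primitive $(m/(m,2))$th root of unity is produced. The degree count $\varphi(m) = e\,\varphi\big(m/(m,2)\big)$, which uses $\varphi(2k)=\varphi(k)$ for $k$ odd and $\varphi(2k)=2\varphi(k)$ for $k$ even, shows that the map must be $e$-to-$1$ onto the primitive $(m/(m,2))$th roots, and it remains to confirm this directly.

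The decisive point is whether $-\zeta$ is again a primitive $m$th root of unity, since $\zeta_1^2 = \zeta_2^2$ is equivalent to $\zeta_1 = \pm\zeta_2$. Writing $\zeta = \omega^k$ with $\omega := e^{2\pi i/m}$ and $(k,m)=1$, one has $-\zeta = \omega^{k + m/2}$, so the question becomes whether $(k + m/2,\, m) = 1$. I would split into three cases. If $2 \nmid m$, squaring permutes the primitive $m$th roots, and the identity reads $\prod_\zeta(Y-\zeta^2) = \Phi_m(Y)$, matching $m/(m,2)=m$ and $e=1$. If $m \equiv 2 \pmod 4$, then $k$ and $m/2$ are both odd, so $k + m/2$ is even while $m$ is even; hence $-\zeta$ is never primitive, squaring is injective, and one obtains a bijection onto the primitive $(m/2)$th roots, consistent with $e=1$. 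If $4 \mid m$, the same computation gives $(k+m/2,\, m)=1$ prime by prime, so $-\zeta$ is always a primitive $m$th root distinct from $\zeta$; squaring is then exactly $2$-to-$1$ onto the primitive $(m/2)$th roots, yielding the exponent $e=2$.

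I expect the main obstacle to be the bookkeeping in this last case distinction---specifically, verifying $(k + m/2,\, m) = 1$ separately at the prime $2$ and at the odd prime divisors of $m$, and checking that the image of the squaring map exhausts all primitive $(m/(m,2))$th roots rather than only a proper subset. Once injectivity together with surjectivity (respectively the exact $2$-to-$1$ property) is established in each case, the three subcases assemble immediately into the stated formula.
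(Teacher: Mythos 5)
Your proposal is correct and follows essentially the same route as the paper: the identical opening factorization $\Phi_m(X)\Phi_m(-X)=(-1)^{\varphi(m)}\prod_\zeta(X^2-\zeta^2)$, followed by the same three-way case split on $m \bmod 4$. The only difference is packaging—where you analyze the fibers of the squaring map $\zeta\mapsto\zeta^2$ (showing it is injective when $2\,\|\,m$ and exactly $2$-to-$1$ when $4\mid m$, via the coprimality of $k+m/2$ with $m$), the paper reaches the same counts by re-indexing and dividing products over residues coprime to $m/2$; both amount to the same counting argument.
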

\begin{proof}
For every positive integer $n$, let $\zeta_n := \mathrm{e}^{2 \pi \mathbf{i}/n}$ be a primitive $n$th root of unity.
We have
\begin{equation}\label{equ:refle1}
\Phi_m(X) \Phi_m(-X) = \prod_{\substack{1 \,\leq\, k \,\leq\, m \\ (k,\, m) \,=\, 1}} (X - \zeta_m^k) (-X - \zeta_m^k) = (-1)^{\varphi(m)} \prod_{\substack{1 \,\leq\, k \,\leq\, m \\ (k,\, m) \,=\, 1}} (X^2 - \zeta_m^{2k}) .
\end{equation}
If $2 \nmid m$ then $\zeta_m^2$ is a primitive $m$th root of unity and the last product of~\eqref{equ:refle1} is equal to $\Phi_m\big(X^2\big)$.
If $2 \mid m$ then $\zeta_m^2 = \zeta_{m / 2}$ is a primitive $(m/2)$th root of unity.
Also, if $2 \mid\mid m$ then $\zeta_{m / 2}^2$ is a primitive $(m/2)$th root of unity, and the last product of~\eqref{equ:refle1} is equal to
\begin{align*}
\prod_{\substack{1 \,\leq\, k \,\leq\, m \\ (k,\, m) \,=\, 1}} (X^2 - \zeta_{m/2}^k) &= \prod_{\substack{1 \,\leq\, k \,\leq\, m \\ (k,\, m/2) \,=\, 1}} (X^2 - \zeta_{m/2}^k) \prod_{\substack{1 \,\leq\, h \,\leq\, m/2 \\ (h,\, m/2) \,=\, 1}} (X^2 - \zeta_{m/2}^{2h})^{-1} \\
    &= \Phi_{m/2}\big(X^2\big)^2 / \Phi_{m/2}\big(X^2\big) = \Phi_{m/2}\big(X^2\big) .
\end{align*}
If $4 \mid m$ then the last product of~\eqref{equ:refle1} is equal to
\begin{equation*}
\prod_{\substack{1 \,\leq\, k \,\leq\, m \\ (k,\, m) \,=\, 1}} (X^2 - \zeta_{m/2}^k) = \prod_{\substack{1 \,\leq\, k \,\leq\, m \\ (k,\, m / 2) \,=\, 1}} (X^2 - \zeta_{m/2}^k) = \Phi_{m / 2}\big(X^2\big)^2 ,
\end{equation*}
and the proof is complete.
\end{proof}

The problem of determining which terms of a Lehmer sequence have a primitive divisor has a very long history.
The first complete classification was given by Bilu, Hanrot, and Voutier~\cite{MR1863855} (see also~\cite{MR2289425}).
We make use of the following particular case.

\begin{lemma}\label{lem:gammadelta1}
Let $(u_k(\gamma, \delta))_{k \geq 0}$ be a Lehmer sequence with $\gamma\delta = 1$.
Then $u_n(\gamma, \delta)$ has a primitive divisor for every positive integer $n \notin \{1, 2, 3, 4, 5, 6, 10, 12\}$.
\end{lemma}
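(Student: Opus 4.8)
The plan is to deduce this from the complete classification of Lehmer sequences whose terms lack a primitive divisor, due to Bilu, Hanrot, and Voutier~\cite{MR1863855} (with the tables refined in~\cite{MR2289425}). That classification provides the dichotomy I want to exploit: if $n > 30$, then the $n$th term of \emph{every} Lehmer sequence has a primitive divisor, so for these $n$ there is nothing to prove; while for $n \leq 30$ the defective pairs are confined to an explicit finite list of sporadic Lehmer pairs (for $5 \leq n \leq 30$) together with a few parametric families (concentrated at $n \leq 6$). Hence the lemma reduces to a finite inspection of this list, subject to the extra hypothesis $\gamma\delta = 1$.

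To carry out the inspection I would record each defective pair through the integers $(\gamma+\delta)^2$ and $(\gamma-\delta)^2$ that determine it, and use $\gamma\delta = \tfrac14\big((\gamma+\delta)^2 - (\gamma-\delta)^2\big)$, so that the constraint $\gamma\delta = 1$ becomes simply $(\gamma+\delta)^2 - (\gamma-\delta)^2 = 4$. The indices $n \in \{1,2,3,4,5,6\}$ are excluded in the statement, so the parametric families need not be examined; note also that imposing $\gamma\delta = 1$ together with small values of $(\gamma+\delta)^2$ yields only ratios $\gamma/\delta$ that are roots of unity (for instance $(\gamma+\delta)^2 \in \{1,2,3\}$) or the degenerate case $\gamma = \delta$ (when $(\gamma+\delta)^2 = 4$), none of which are admissible. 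It then remains to scan the sporadic table over $7 \leq n \leq 30$ for entries satisfying $(\gamma+\delta)^2 - (\gamma-\delta)^2 = 4$.

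Running through the table, the only admissible pair with $\gamma\delta = 1$ that survives is the one with $(\gamma+\delta)^2 = 5$ and $(\gamma-\delta)^2 = 1$ — the Fibonacci--Lucas pair, for which $u_n(\gamma,\delta)$ is the Lucas number $L_n$ when $n$ is odd and the Fibonacci number $F_n$ when $n$ is even — and its defective indices in this range are exactly $n = 10$ (where $F_{10} = 55$ has its prime $11$ already appearing in $u_5 = L_5 = 11$, while $5 \mid (\gamma^2-\delta^2)^2$) and $n = 12$ (where $F_{12} = 144$ is built only from $2 \mid u_3$ and $3 \mid u_4$). Both lie in the excluded set, and no entry with $\gamma\delta = 1$ occurs for any other $n$ with $7 \leq n \leq 30$; combined with the absence of exceptions for $n > 30$, this gives the claim. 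The main obstacle is precisely this bookkeeping: one must translate the parametrization used in the tables into the condition $\gamma\delta = 1$, verify it for every sporadic entry in the range, and handle the equivalence relation under which Bilu--Hanrot--Voutier list their pairs (which can replace $\gamma\delta$ by $-\gamma\delta$), so that no equivalence orbit containing a representative with $\gamma\delta = 1$ is overlooked.
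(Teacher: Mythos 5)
Your overall strategy is the same as the paper's: invoke the Bilu--Hanrot--Voutier classification, note that no $n$-defective Lehmer pairs exist for $n > 30$, and then do a finite check of the listed defective pairs against the constraint $\gamma\delta = 1$, i.e.\ $a - b = \pm 4$ in the BHV parametrization $\gamma = \zeta(\sqrt{a}-\sqrt{b})/2$, $\delta = \zeta(\sqrt{a}+\sqrt{b})/2$ (your attention to the equivalence-class issue, which is why one needs $\pm 4$ rather than $+4$, is correct). However, there is a genuine gap: you assert that the parametric families of defective pairs are ``concentrated at $n \leq 6$'' and therefore need not be examined. That is not how the BHV classification is structured. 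Their Theorem~C / Table~2 (the sporadic list) covers only $7 \leq n \leq 30$ with $n \notin \{8, 10, 12\}$, while Theorem~1.3 / Table~4 gives \emph{parametric families} of $n$-defective Lehmer pairs precisely for $n \in \{3,4,5,6,8,10,12\}$. Since $n = 8$ is \emph{not} in the lemma's exception set, your argument proves nothing for $n = 8$: the $8$-defective pairs live in a parametric family that your scan of the sporadic table never touches. The paper closes exactly this point by checking Table~4 against $a - b = \pm 4$ for all of $n \in \{3,4,5,6,8,10,12\}$ and finding that only $n \in \{3,4,5,6,10,12\}$ survive, which is why $8$ can be omitted from the exception list. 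Relatedly, your claim that the sporadic table would exhibit the Fibonacci--Lucas pair at $n = 10, 12$ is inconsistent with the same point, since those indices are excluded from Table~2's scope; this does not affect the lemma (as $10$ and $12$ are excepted anyway), but it signals the misreading of the classification that produces the $n=8$ hole.

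A further subtlety the paper handles and you could not have guessed: BHV's Table~4 as published is incomplete. The paper's Remark~2.5 notes that the pairs $(a,b) = (-1,-5)$ for $n = 5$ and $(-5,-1)$ for $n = 10$ are missing (both satisfy $a - b = \pm 4$, so both are relevant to the $\gamma\delta = 1$ check). These only affect indices already in the exception set, so the lemma's statement stands, but a complete verification along your lines would need this correction as well as the $n = 8$ check.
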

\begin{proof}
Following~\cite{MR1863855}, we can write $\gamma = \zeta \big(\!\sqrt{a} - \sqrt{b}\big) / 2$ and $\delta = \zeta \big(\!\sqrt{a} + \sqrt{b}\big) / 2$, where $a, b$ are integers and $\zeta$ is a fourth root of unity.
In~particular, $\gamma\delta = 1$ implies that $a - b = \pm 4$.
Let $n \geq 3$ be an integer and suppose that $u_n(\gamma, \delta)$ has no primitive divisor.
By~\cite[Theorem~1.4]{MR1863855}, we have that $n \leq 30$.
If $7 \leq n \leq 30$ and $n \notin \{8, 10, 12\}$, then by~\cite[Theorem~C]{MR1863855} we have that $(a, b)$ belongs to~\cite[Table~2]{MR1863855}, but none of the pairs in such table satisfies $a - b = \pm 4$.
If $n \in \{3,4,5,6,8,10,12\}$ then by~\cite[Theorem~1.3]{MR1863855} we have that $(a, b)$ belongs to~\cite[Table~4]{MR1863855} and, checking again the condition $a - b = \pm 4$, we get that $n \in \{3, 4, 5, 6, 10, 12\}$ (see Remark~\ref{rem:5def}).
\end{proof}

\begin{remark}\label{rem:5def}
In line $n = 5$ of~\cite[Table~4]{MR1863855}, one has to include also the pair $(-1, -5)$, which is $(\psi_{k - 2\varepsilon}, \psi_{k - 2\varepsilon} - 4\psi_k)$ for $k = 1$ and $\varepsilon = 1$ (note that $\psi_{-1} = -1$).
This is lost when in~\cite[p.~89]{MR1863855} it is claimed that ``By (28), we have [...] $k \neq 1$ in the case (35)'' but $k = 1$ (and $\varepsilon = 1$) does not contradict~\cite[Eq.~(28)]{MR1863855}.
Similarly, in line $n = 10$ of~\cite[Table~4]{MR1863855}, one has to include also the pair $(-5, -1)$, which is $(\psi_{k - 2\varepsilon} - 4\psi_k, \psi_{k - 2\varepsilon})$ for $k = 1$ and $\varepsilon = 1$.
\end{remark}

\begin{remark}
Lemma~\ref{lem:gammadelta1} cannot be improved without further information on $\gamma, \delta$.
Indeed, it can be checked that $u_n\!\Big(\tfrac{\sqrt{5} - 1}{2}, \tfrac{\sqrt{5} + 1}{2}\Big)$ for $n \in \{1,2,6,10,12\}$, 
$u_n\!\Big(\tfrac{\sqrt{-2} - \sqrt{-6}}{2}, \tfrac{\sqrt{-2} + \sqrt{-6}}{2}\Big)$ for $n \in \{3,4\}$, 
and $u_5\!\Big(\tfrac{\sqrt{-1} - \sqrt{-5}}{2}, \tfrac{\sqrt{-1} + \sqrt{-5}}{2}\Big)$ have no primitive divisor.
\end{remark}

We need the identities for the \emph{Aurifeuillian factorizations} of the cyclotomic polynomials~\cite{MR922449}.
However, instead of using them how it is commonly done, that is, to write values of the cyclotomic polynomials as differences of two squares and thus factorize them; we use them to write values of the cyclotomic polynomials as sums of two squares (proof of Lemma~\ref{lem:1mod4} below).

A polynomial $F \in \mathbb{Z}[X, Y]$ is \emph{symmetric}, respectively \emph{antisymmetric}, if $F(Y, X) = F(X, Y)$, respectively $F(Y, X) = -F(X, Y)$.
The \emph{symmetry type} of $F$ is $\s(F) = +1$ if $F$ is symmetric, and $\s(F) = -1$ if $F$ is antisymmetric.

\begin{lemma}\label{lem:auri}
Let $k$ be a squarefree integer and let $n \geq 3$ be an integer.
Suppose that one of the following conditions holds:
\begin{enumerate}[label=(c{\small\arabic*})]
\item\label{ite:c1} $k \equiv 1 \pmod 4$, $k \mid n$, and $2k \nmid n$.
\item\label{ite:c2} $k \not\equiv 1 \pmod 4$, $2k \mid n$, and $4k \nmid n$.
\end{enumerate}
Then there exist homogeneous polynomials $F_{n, k}, G_{n, k} \in \mathbb{Z}[X, Y]$ such that
\begin{equation*}
\Phi_n(X, Y) = F_{n, k}(X, Y)^2 - k (XY)^{q_n} G_{n, k}(X, Y)^2 ,
\end{equation*}
where $q_n := \prod_{p \,>\, 2,\; p^v \,\mid\mid\, n} p^{v - 1}$.
Furthermore, we have
\begin{equation*}
\deg(F_{n, k}) = \frac{\varphi(n)}{2}, \quad \deg(G_{n, k}) = \frac{\varphi(n)}{2} - q_n ,
\end{equation*}
while
\begin{equation*}
\s(F_{n, k}) = 
    \begin{cases}
    1 & \text{ if $k = 1$, or $k > 1$ and $2 \mid n$}, \\
    (-1)^{\varphi(n) / 2} & \text{ otherwise},
    \end{cases}
\end{equation*}
and $\s(G_{n, k}) = \sign(k) \s(F_{n, k})$.
\end{lemma}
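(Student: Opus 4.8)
The plan is to read the asserted identity as the homogenization of the classical Aurifeuillian factorization of $\Phi_n$ recorded in \cite{MR922449}, and then to recover the degree and symmetry data by elementary means.

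First I would invoke the dehomogenized identity: under \ref{ite:c1} or \ref{ite:c2} there are $f, g \in \mathbb{Z}[x]$, of degrees $\varphi(n)/2$ and $\varphi(n)/2 - q_n$, with $\Phi_n(x) = f(x)^2 - k\,x^{q_n} g(x)^2$. Setting $x = X/Y$ and multiplying through by $Y^{\varphi(n)}$, the term $f(x)^2$ becomes $F_{n,k}^2$ with $F_{n,k} := Y^{\varphi(n)/2} f(X/Y)$, while $k\,x^{q_n} g(x)^2$ becomes $k\,X^{q_n} Y^{\varphi(n) - q_n - 2\deg g}\,G_{n,k}^2$ with $G_{n,k} := Y^{\varphi(n)/2 - q_n} g(X/Y)$. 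Since $\varphi(n) - 2\deg g = 2q_n$, the exponent of $Y$ collapses to exactly $q_n$, giving the term $k(XY)^{q_n} G_{n,k}^2$. This produces the displayed factorization together with the two degree formulas.

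The remaining and main task is to determine the two symmetry types. Here I would use that $\Phi_n$ is self-reciprocal for $n \geq 2$: its roots pair up as $\zeta, \zeta^{-1}$, so $\Phi_n(Y, X) = \Phi_n(X, Y)$. Applying the swap $(X,Y) \mapsto (Y,X)$ to the factorization yields another factorization of the same form, which by a uniqueness-up-to-sign argument — most transparently proved after the substitution $X = U^2$, $Y = V^2$, where the two conjugate factors $F_{n,k}(U^2,V^2) \pm \sqrt{k}\,(UV)^{q_n} G_{n,k}(U^2,V^2)$ live in the unique factorization domain $\mathbb{Q}(\sqrt{k})[U, V]$ — must be $(\pm F_{n,k}, \pm G_{n,k})$. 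Hence $\s(F_{n,k})$ and $\s(G_{n,k})$ are well defined. To pin down $\s(F_{n,k})$ I would compare the coefficients of $X^{\varphi(n)}$ and of $Y^{\varphi(n)}$ in the identity: both equal $1$ (using $\Phi_n(0) = 1$ for $n \geq 2$) and only $F_{n,k}^2$ contributes to them, so the leading and trailing coefficients of $F_{n,k}$ are $\pm 1$; normalizing the leading one to $+1$, the trailing coefficient equals $\s(F_{n,k})$, and evaluating it from the explicit shape of the factor furnishes the stated case distinction. For the relation $\s(G_{n,k}) = \sign(k)\,\s(F_{n,k})$ I would argue that the swap $(X,Y) \mapsto (Y,X)$ acts on the roots of $\Phi_n$ as complex conjugation $\zeta \mapsto \zeta^{-1}$, hence on the element $\sqrt{k} \in \mathbb{Q}(\zeta_n)$ as multiplication by $\sign(k)$ (it being real for $k > 0$ and imaginary for $k < 0$); comparing the coefficient of $\sqrt{k}$ on the two sides of the swapped factorization then forces $\s(G_{n,k})$ to equal $\s(F_{n,k})$ when $k > 0$ and to be its opposite when $k < 0$.

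The hard part will be the exact sign computations: establishing the uniqueness-up-to-sign that endows $F_{n,k}, G_{n,k}$ with a definite symmetry type, and, above all, evaluating the trailing coefficient of $F_{n,k}$ so as to split off the three cases $k = 1$, $k > 1$ with $2 \mid n$, and the remaining case where $\s(F_{n,k}) = (-1)^{\varphi(n)/2}$. Everything else — the homogenization, the degree bookkeeping, and the reduction $\s(G_{n,k}) = \sign(k)\,\s(F_{n,k})$ — is formal and follows the steps above; it is the descent into the explicit shape of the Aurifeuillian factor (through its Gauss-sum expression or the recursion of \cite{MR922449}) that carries the genuine content.
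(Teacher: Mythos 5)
Your first paragraph --- pulling the dehomogenized Aurifeuillian identity and the two degree formulas from \cite{MR922449} and homogenizing --- is exactly what the paper does: its entire proof is the one-line observation that the lemma is the homogeneous version of \cite[Theorem~2.1]{MR922449}. The difference is that the cited theorem already contains the symmetry data as well: it asserts that the dehomogenized factors $f$ and $g$ are reciprocal, respectively anti-reciprocal, polynomials with precisely the stated case distinction, and the reciprocality relation $x^{\deg f} f(1/x) = \pm f(x)$ translates verbatim, under homogenization, into $F_{n,k}(Y,X) = \pm F_{n,k}(X,Y)$. Had you cited that part of the theorem too, you would be done.

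Instead you chose to re-derive the symmetry types, and that re-derivation has genuine gaps. First, the uniqueness-up-to-sign on which everything rests is asserted, not proved, and it is not automatic: in $\mathbb{Q}(\sqrt{k})[U,V]$ the conjugate factors $F_{n,k}(U^2,V^2) \pm \sqrt{k}\,(UV)^{q_n} G_{n,k}(U^2,V^2)$ are in general reducible (already $\Phi_n(U^2) = \Phi_n(U)\,\Phi_{2n}(U)$ for odd $n$, and each cyclotomic factor may split further over $\mathbb{Q}(\sqrt{k})$), so one must rule out that a regrouping of the irreducible constituents yields a different pair $(F',G')$ of the same shape; this needs an actual argument (via the Galois action on the factors, or the constraint that the odd part has the special form $(UV)^{q_n}G(U^2,V^2)$). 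Second, and more seriously, the evaluation of $\s(F_{n,k})$ --- the three-case formula distinguishing $k=1$, then $k>1$ with $2 \mid n$, then the remaining case with value $(-1)^{\varphi(n)/2}$ --- is never carried out. You correctly reduce it to the trailing coefficient $f(0)$ of the monic-normalized cofactor, but the identity only gives $\Phi_n(0) = f(0)^2$, hence $f(0) = \pm 1$; determining \emph{which} sign is precisely the nontrivial content of the symmetry claim, and saying that evaluating it from ``the explicit shape of the factor'' furnishes the answer is a restatement of the goal, not a proof. (The same applies, to a lesser degree, to the conjugation argument for $\s(G_{n,k}) = \sign(k)\,\s(F_{n,k})$: the idea that the swap acts on roots as $\zeta \mapsto \zeta^{-1}$ and hence on $\sqrt{k}$ as multiplication by $\sign(k)$ is the right one, but the passage from this Galois-theoretic action to the formal symmetry of the integer polynomials still has to be written down.) As a standalone proof the proposal is therefore incomplete; the efficient repair is simply to quote the reciprocality statements of \cite[Theorem~2.1]{MR922449} along with the factorization, as the paper does.
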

\begin{proof}
The claim is the homogeneous version of~\cite[Theorem~2.1]{MR922449}.
\end{proof}

\begin{lemma}\label{lem:auri_coprime}
Let $n$ and $k$ be as in Lemma~\ref{lem:auri}, and let $\zeta, \eta$ be algebraic integers.
If a prime number $p$ divides both $F_{n,k}(\zeta, \eta)$ and $G_{n,k}(\zeta, \eta)$ then $p$ divides $2n(\zeta \eta)^j$ for some integer $j \geq 0$.
(Recall that we say that $p$ divides an algebraic integer $\xi$ if $\xi / p$ is an algebraic integer.)
\end{lemma}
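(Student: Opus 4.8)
The plan is to exploit the defining Aurifeuillian identity from Lemma~\ref{lem:auri}, namely
\[
\Phi_n(X, Y) = F_{n,k}(X, Y)^2 - k\,(XY)^{q_n} G_{n,k}(X, Y)^2 .
\]
Suppose $p$ divides both $F_{n,k}(\zeta, \eta)$ and $G_{n,k}(\zeta, \eta)$. Then evaluating the identity at $(\zeta, \eta)$ immediately yields $p \mid \Phi_n(\zeta, \eta)$, since both squares on the right are divisible by $p^2$, hence by $p$. So the first step reduces the problem to controlling the algebraic-integer divisors of $\Phi_n(\zeta, \eta)$ together with those coming from $F_{n,k}$ and $G_{n,k}$.

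The core idea is a resultant/derivative argument showing that any prime dividing both $F_{n,k}(\zeta,\eta)$ and $G_{n,k}(\zeta,\eta)$ must divide the discriminant-type data of $\Phi_n$, which I expect to be a divisor of $2n$ up to a power of $\zeta\eta$. Concretely, I would differentiate the identity formally. Since $\Phi_n(X,Y)$ is (essentially) separable as a homogeneous form — its dehomogenization $\Phi_n(X)$ has distinct roots, namely the primitive $n$th roots of unity — the only way a common factor of $F_{n,k}$ and $G_{n,k}$ can arise modulo $p$ is if $\Phi_n(X,Y)$ acquires a repeated factor modulo $p$. A repeated root of $\Phi_n$ modulo $p$ forces $p$ to divide the resultant $\Res(\Phi_n, \Phi_n')$, which is, up to sign and powers of the variables, a divisor of $n$ raised to a suitable power; this is where the $2n$ in the conclusion originates. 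The factor $(\zeta\eta)^j$ accounts for the homogenizing monomial $(XY)^{q_n}$ and for specialization at algebraic integers that may themselves be divisible by $p$.

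In more detail, the key step I would carry out is the following. Working modulo $p$ in $\overline{\mathbb{F}_p}[X,Y]$, reduce the identity to $\overline{\Phi_n} = \overline{F_{n,k}}^{\,2} - \overline{k}\,(XY)^{q_n}\overline{G_{n,k}}^{\,2}$. If $p$ divides both $F_{n,k}(\zeta,\eta)$ and $G_{n,k}(\zeta,\eta)$, then $(\zeta \bmod \mathfrak{p}, \eta \bmod \mathfrak{p})$ is a common zero of $\overline{F_{n,k}}$ and $\overline{G_{n,k}}$ for some prime $\mathfrak{p}$ above $p$; differentiating the identity, this point is also a zero of $\overline{\Phi_n}$ and of its partial derivatives, i.e.\ a singular point of the curve $\overline{\Phi_n}=0$. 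Provided $p \nmid 2n$ and $p \nmid \zeta\eta$, I claim $\overline{\Phi_n}$ is a product of distinct nonzero linear forms over $\overline{\mathbb{F}_p}$ (the reductions of $X - \zeta_n^a Y$ over distinct primitive $n$th roots of unity remain distinct, since coincidences or vanishing would force $p \mid n$), hence has no singular point off $XY=0$. This contradiction forces $p \mid 2n(\zeta\eta)^j$, as desired.

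I expect the main obstacle to be the bookkeeping for the exceptional small primes and the exact power of $XY$: one must verify carefully that the reductions $X - \zeta_n^a Y$ stay pairwise distinct and nonvanishing precisely when $p \nmid 2n$ (the factor $2$ entering through the even part of $n$ and the structure of $\Phi_n$ at $p = 2$), and that the monomial $(XY)^{q_n}$ contributes only a factor absorbed into $(\zeta\eta)^j$. The separability statement for $\Phi_n \bmod p$ when $p \nmid n$ is standard, but relating it cleanly to a divisibility of $2n$ rather than $n$ — and confirming that no additional primes leak in from the coefficients of $F_{n,k}, G_{n,k}$ — is the delicate part of the argument.
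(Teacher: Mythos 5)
Your proposal is correct in its core idea but follows a genuinely different route from the paper. The paper argues algebraically: after reducing to $q_n = 1$ via $F_{n,k}(X,Y) = F_{m,k}(X^{q_n}, Y^{q_n})$, it considers the ideal $I$ generated by $F_{n,k}(X,1) \pm G_{n,k}(X,1)\sqrt{kX}$ in $\mathbb{Z}[X, \sqrt{kX}]$, observes that the index $i_{n,k}$ of $I$ annihilates the quotient and is therefore a linear combination of the two generators, homogenizes and specializes at $(\zeta, \eta)$, and finally invokes the bound $i_{n,k} \mid (8n)^{\varphi(n)}$ from \cite[Lemma~2.6]{MR922449}. Your route is geometric and self-contained: a common zero of $\overline{F_{n,k}}$ and $\overline{G_{n,k}}$ modulo a prime $\mathfrak{p}$ above $p$ is, by differentiating the Aurifeuillian identity, a singular point of $\overline{\Phi_n} = 0$; when $p \nmid n$ the form $\overline{\Phi_n}$ is a product of $\varphi(n) \geq 2$ distinct linear forms $X - \omega Y$ with $\omega \neq 0$ (separability of $X^n - 1$ modulo $p$), all passing through the origin, so its only singular point is the origin. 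This avoids the index machinery of \cite{MR922449} entirely, needs no reduction to $q_n = 1$ (the factor $(XY)^{q_n}$ dies under differentiation at a zero of $\overline{G_{n,k}}$), and in fact yields the marginally stronger conclusion $p \mid n(\zeta\eta)^j$; the paper's proof is shorter only because it leans on a reference already imported for Lemma~\ref{lem:auri}.

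Two details in your write-up need tightening, both routine. First, the quantifier on $\mathfrak{p}$: you must run the argument for \emph{every} prime $\mathfrak{p}$ above $p$, not ``some'' $\mathfrak{p}$ --- this is legitimate, since $p \mid F_{n,k}(\zeta,\eta)$ forces $v_{\mathfrak{p}}(F_{n,k}(\zeta,\eta)) \geq 1$ for all $\mathfrak{p} \mid p$ --- because ``$p$ divides $(\zeta\eta)^j$'' means $v_{\mathfrak{p}}((\zeta\eta)^j) \geq v_{\mathfrak{p}}(p) = e_{\mathfrak{p}}$ at \emph{every} such $\mathfrak{p}$. Second, your hypothesis for contradiction ``$p \nmid \zeta\eta$'' is not the negation of the existential conclusion: for algebraic integers, $p \nmid \zeta\eta$ does not preclude $p \mid (\zeta\eta)^j$ for larger $j$ (ramification can intervene, e.g.\ $\zeta\eta = \sqrt{p}$). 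The clean ending is affirmative rather than by contradiction: assuming $p \nmid n$, the singular-point argument gives $\zeta, \eta \in \mathfrak{p}$ for every $\mathfrak{p} \mid p$, hence $v_{\mathfrak{p}}\big((\zeta\eta)^j\big) \geq 2j \geq e_{\mathfrak{p}}$ once $j \geq [K : \mathbb{Q}]$, where $K$ is a number field containing $\zeta$ and $\eta$, so $p \mid (\zeta\eta)^j$. With these repairs your argument is a complete and valid proof of the lemma.
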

\begin{proof}
With the notation of Lemma~\ref{lem:auri}, we can write $n = q_n m$ for an integer $m \geq 3$ with $q_m = 1$ and such that the hypothesis of Lemma~\ref{lem:auri} holds with $m$ in place of $n$.
Moreover, by~\cite[Eqs.~(2)]{MR922449} we have that $F_{n,k}(X, Y) = F_{m,k}(X^{q_n}, Y^{q_n})$ and $G_{n,k}(X, Y) = G_{m,k}(X^{q_n}, Y^{q_n})$.
Therefore, without loss of generality, we can assume that $q_n = 1$.

Let $i_{n,k}$ be the order of $\mathbb{Z}[X, \sqrt{kX}] / I$, where $I$ is the ideal generated by 
\begin{equation}\label{equ:FplusminusG}
F_{n,k}(X, 1) - G_{n,k}(X, 1)\sqrt{kX} \quad \text{ and } \quad F_{n,k}(X, 1) + G_{n,k}(X, 1)\sqrt{kX}
\end{equation}
in $\mathbb{Z}[X, \sqrt{kX}]$.
Then $i_{n,k}$ is a linear combination of~\eqref{equ:FplusminusG} in $\mathbb{Z}[X, \sqrt{kX}]$, and by homogeneization $i_{n,k} Y^j$ is a linear combination of
\begin{equation*}
F_{n,k}(X) - G_{n,k}(X)\sqrt{kXY} \quad \text{ and } \quad F_{n,k}(X, Y) + G_{n,k}(X, Y)\sqrt{kXY} ,
\end{equation*}
in $\mathbb{Z}[X, Y, \sqrt{kXY}]$, for some integer $j \geq 0$.
Substituting $X = \zeta$ and $Y = \eta$, we get that if $p$ divides both $F_{n,k}(\zeta, \eta)$ and $G_{n,k}(\zeta, \eta)$ then it divides $i_{n,k} \eta^j$, and so it divides $i_{n,k} (\zeta\eta)^j$.
From~\cite[Lemma~2.6]{MR922449} (which requires $q_n = 1$) we have that $i_{n,k}$ divides $(8n)^{\varphi(n)}$, and thus the claim follows.
\end{proof}

Now we can prove a result on primitive divisors of Lehmer sequences.

\begin{lemma}\label{lem:1mod4}
Let $(u_k(\gamma, \delta))_{k \geq 0}$ be a Lehmer sequence and write $(\gamma^2 - \delta^2)^2 = D_0 D_1^2$ where $D_0, D_1 \in \mathbb{Z}$ and $D_0$ is squarefree.
Suppose that $D_0 \geq 5$ and $D_0 \equiv 1 \pmod 4$.
Then, for every positive integer $\ell$ such that $4D_0 \mid \ell$, we have that each odd primitive divisor $p$ of $u_\ell(\gamma, \delta)$ satisfies $p \equiv 1 \pmod 4$.
\end{lemma}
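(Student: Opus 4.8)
The plan is to use the Aurifeuillian factorizations of Lemma~\ref{lem:auri} to write $\Phi_\ell(\gamma,\delta)$ (up to a harmless factor $D_0$) as a sum of two integer squares, and then invoke the classical fact that an odd prime dividing a sum of two coprime squares must be $\equiv 1 \pmod 4$. First I would fix an odd primitive divisor $p$ of $u_\ell(\gamma,\delta)$ and record the consequences of primitivity. By Lemma~\ref{lem:lehmer}\ref{ite:prim_equiv} we have $p \mid \Phi_\ell(\gamma,\delta)$ and $p \equiv \pm1 \pmod \ell$, while Lemma~\ref{lem:lehmer}\ref{ite:div} and the definition of primitive divisor give $p \nmid \gamma\delta$ and $p \nmid (\gamma^2-\delta^2)^2 = D_0 D_1^2$; in particular $p \nmid D_0$. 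Writing $\ell = 2^s t$ with $t$ odd, the hypothesis $4D_0 \mid \ell$ forces $s \geq 2$ and $D_0 \mid t$, so $t \geq D_0 \geq 5$; moreover $p \equiv \pm1 \pmod t$, whence $p \nmid t$ (if $p \mid t$, then $p \mid t \mid p \mp 1$ gives $p \mid 1$).

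The core step is the factorization. Using $\Phi_{2^s t}(X) = \Phi_{2t}\big(X^{2^{s-1}}\big)$ and homogenizing, I get $\Phi_\ell(\gamma,\delta) = \Phi_{2t}(\gamma', \delta')$, where $\gamma' := \gamma^{2^{s-1}}$ and $\delta' := \delta^{2^{s-1}}$. I then apply Lemma~\ref{lem:auri} to $\Phi_{2t}$ with $k := -D_0$: since $-D_0 \equiv 3 \pmod 4$, since $2D_0 \mid 2t$ (as $D_0 \mid t$), and since $4D_0 \nmid 2t$ (as $t$ is odd), condition~\ref{ite:c2} holds. This yields $\Phi_{2t}(X,Y) = F^2 + D_0 (XY)^{q_{2t}} G^2$ with $F := F_{2t,-D_0}$, $G := G_{2t,-D_0}$, and $q_{2t} = q_t$ odd. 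The decisive point, which is exactly where $4 \mid \ell$ (and not merely $2 \mid \ell$) is used, is that the exponent $2^{s-1}$ is even, so $\gamma'\delta' = (\gamma\delta)^{2^{s-1}}$ is a perfect square and hence so is $(\gamma'\delta')^{q_{2t}}$.

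Next I would read off the values of $F$ and $G$ at $(\gamma',\delta')$ from their symmetry types. Because $\gamma^2$ is a root of the monic integer polynomial $Z^2 - (\gamma^2+\delta^2)Z + (\gamma\delta)^2$ and $2^{s-1}$ is even, $\gamma'$ and $\delta'$ are algebraic integers with $\gamma'+\delta',\, \gamma'\delta' \in \mathbb{Z}$ and $\gamma'-\delta' \in \sqrt{D_0}\,\mathbb{Z}$ (the last since $\gamma^2 - \delta^2 = \pm D_1 \sqrt{D_0}$). Thus a symmetric homogeneous polynomial takes an integer value at $(\gamma',\delta')$, while an antisymmetric one takes a value in $\sqrt{D_0}\,\mathbb{Z}$. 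By Lemma~\ref{lem:auri} the types of $F$ and $G$ are opposite, as $\s(G) = \sign(-D_0)\,\s(F) = -\s(F)$, so exactly one of $F(\gamma',\delta'),\, G(\gamma',\delta')$ is an integer and the other lies in $\sqrt{D_0}\,\mathbb{Z}$. Substituting into the factorization and absorbing the extra $D_0$ into the square coming from the antisymmetric term (using that $(\gamma'\delta')^{q_{2t}}$ is a square), I obtain integers $A, B$ with $\Phi_\ell(\gamma,\delta) = A^2 + B^2$ or $\Phi_\ell(\gamma,\delta) = D_0(A^2 + B^2)$.

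Since $p \mid \Phi_\ell(\gamma,\delta)$ and $p \nmid D_0$, in either case $p \mid A^2 + B^2$. To conclude I must check that $p$ does not divide both $A$ and $B$: if it did, then, unwinding the identifications above, $p$ would divide both $F(\gamma',\delta')$ and $G(\gamma',\delta')$, so Lemma~\ref{lem:auri_coprime} would force $p \mid 2\cdot 2t\cdot(\gamma'\delta')^j = 4t\,(\gamma\delta)^{2^{s-1}j}$ for some $j \geq 0$, contradicting $p \nmid t$ and $p \nmid \gamma\delta$. Hence $A^2 \equiv -B^2 \pmod p$ with $p \nmid B$, so $-1$ is a quadratic residue modulo $p$ and $p \equiv 1 \pmod 4$. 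I expect the main obstacle to be the bookkeeping in the two middle steps: one must simultaneously arrange that the Aurifeuillian hypothesis~\ref{ite:c2} holds for $(n,k) = (2t,-D_0)$, that $\gamma'\delta'$ is a perfect square, and that the symmetry type of the antisymmetric factor is precisely the one turning $D_0 (\gamma'\delta')^{q_{2t}} G(\gamma',\delta')^2$ into a perfect square; once these are set up, the final coprimality is a clean application of Lemma~\ref{lem:auri_coprime}.
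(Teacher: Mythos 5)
Your proof is correct and takes essentially the same route as the paper's: write $\ell = 2^{s}t$ with $t$ odd, apply the Aurifeuillian identity of Lemma~\ref{lem:auri} with $k = -D_0$ to $\Phi_{2t}$, evaluate at $\big(\gamma^{2^{s-1}}, \delta^{2^{s-1}}\big)$ to express $\Phi_\ell(\gamma,\delta)$ (up to a factor of $D_0$) as a sum of two integer squares, rule out a common prime factor via Lemma~\ref{lem:auri_coprime} together with $p \nmid 2t\gamma\delta$, and conclude $p \equiv 1 \pmod 4$. The only deviation is cosmetic: the paper shows $4 \mid \varphi(2t)$ to pin down that $F_{n,k}$ is symmetric and $G_{n,k}$ antisymmetric (so the alternative case $\Phi_\ell(\gamma,\delta) = D_0(A^2 + B^2)$ never actually arises), whereas you carry both symmetry assignments through the argument, which is harmless.
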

\begin{proof}
Since $4D_0 \mid \ell$, we can write $\ell = 2^v n$ for some positive integers $v$ and $n$ with $2D_0 \mid n$ and $4D_0 \nmid n$.
Put $k := -D_0$.
By the hypotheses on $D_0$, we have that $k$ is negative and squarefree, $n \geq 3$, and~\ref{ite:c2} holds.
Moreover, since $D_0$ is squarefree and $D_0 \equiv 1 \pmod 4$, it follows that $4 \mid \varphi(D_0)$, and so $4 \mid \varphi(n)$.
Therefore, by Lemma~\ref{lem:auri}, we get that
\begin{equation}\label{equ:1mod4_eq1}
\Phi_n(X, Y) = F_{n, k}(X, Y)^2 + D_0 (XY)^{q_n} G_{n, k}(X, Y)^2 ,
\end{equation}
for some homogeneous polynomials $F_{n, k}, G_{n, k} \in \mathbb{Z}[X, Y]$, with $F_{n, k}$ symmetric and $G_{n, k}$ antisymmetric.
Since $G_{n,k}$ is antisymmetric and homogeneous, we have that $G_{n,k}(X, Y) = (X - Y)H_{n, k}(X, Y)$ for some symmetric homogeneous polynomial $H_{n, k} \in \mathbb{Z}[X, Y]$.
Now $F_{n, k}\big(X^{2^v}, Y^{2^v}\big)$ and $H_{n, k}\big(X^{2^v}, Y^{2^v}\big)$ are both symmetric homogeneous polynomials of even degree, and thus they are polynomials in $XY$ and $(X + Y)^2$ with integer coefficients.
Then, recalling that $\gamma\delta$ and $(\gamma + \delta)^2$ are integers, it follows that $F_{n, k}\big(\gamma^{2^v}, \delta^{2^v}\big)$ and $H_{n, k}\big(\gamma^{2^v}, \delta^{2^v}\big)$ are integers.
Since $2 \mid n$, by~\eqref{equ:1mod4_eq1} we get that
\begin{align}\label{equ:1mod4_eq2}
\Phi_\ell(\gamma, \delta) &= \Phi_{2^v n}(\gamma, \delta) = \Phi_{n}\big(\gamma^{2^v}, \delta^{2^v}\big) = F_{n, k}\big(\gamma^{2^v}, \delta^{2^v}\big)^2 + D_0 (\gamma\delta)^{2^v q_n} G_{n, k}\big(\gamma^{2^v}, \delta^{2^v}\big)^2 \\
    &= F_{n, k}\big(\gamma^{2^v}, \delta^{2^v}\big)^2 + D_0 (\gamma\delta)^{2^v q_n} \big(\gamma^{2^v} - \delta^{2^v}\big)^2 H_{n, k}\big(\gamma^{2^v}, \delta^{2^v}\big)^2 \nonumber \\
    &= F_{n, k}\big(\gamma^{2^v}, \delta^{2^v}\big)^2 + D_0 (\gamma\delta)^{2^v q_n} (\gamma^2 - \delta^2)^2 u_{2^v}(\gamma, \delta)^2 H_{n, k}\big(\gamma^{2^v}, \delta^{2^v}\big)^2 \nonumber \\
    &= A^2 + B^2 , \nonumber
\end{align}
where $A := F_{n, k}\big(\gamma^{2^v}, \delta^{2^v}\big)$ and $B := D_0 D_1 (\gamma\delta)^{2^{v-1} q_n} u_{2^v}(\gamma, \delta) H_{n, k}\big(\gamma^{2^v}, \delta^{2^v}\big)$ are both integers.
Let $p$ be an odd primitive divisor of $u_\ell(\gamma, \delta)$.
Hence, also by Lemma~\ref{lem:lehmer}\ref{ite:div} and \ref{ite:prim_equiv}, we have that $p \nmid \gamma\delta(\gamma^2 - \delta^2)^2 n$ and $p \mid \Phi_\ell(\gamma, \delta)$.
Thus, from~\eqref{equ:1mod4_eq2} and Lemma~\ref{lem:auri_coprime}, it follows that $p \mid A^2 + B^2$ but $p \nmid A$ and $p \nmid B$.
Consequently, we have that $p \equiv 1 \pmod 4$.
\end{proof}

\section{Proof of Theorem~\ref{thm:alphabeta}}

Let us begin by proving the equivalence of~\ref{ite:a} and~\ref{ite:prim}.
Let $D := (\alpha - \beta)^2$ be the discriminant of $f$.
First, assume that $\alpha\beta = -1$.
Note that $\gamma\delta = 1$ and $(\gamma + \delta)^2 = D$ are nonzero coprime integers and $\gamma / \delta = -\alpha/\beta$ is not a root of unity, so that $(u_k(\gamma, \delta))_{k \geq 0}$ is a Lehmer sequence.
Put $R_m^{(\varepsilon)} := \Res(f(\varepsilon X), \Phi_m(X))$ for $\varepsilon \in \{-1, +1\}$.
The roots of $f(-X)$ are $-\alpha$ and $-\beta$, while $\gamma / \delta = \alpha^2$ and $\delta / \gamma = \beta^2$.
Hence, from Lemma~\ref{lem:refle}, it follows that
\begin{align}\label{equ:RR}
R_m^+ R_m^- &= \Phi_m(\alpha) \Phi_m(\beta) \Phi_m(-\alpha) \Phi_m(-\beta) = \Phi_m(\alpha) \Phi_m(-\alpha) \Phi_m(\beta) \Phi_m(-\beta) \\
    &= \left(\Phi_n\big(\alpha^2\big) \Phi_n\big(\beta^2\big)\right)^e = \big(\Phi_n(\gamma / \delta) \Phi_n(\delta / \gamma) \big)^e = \Big(\Phi_n(\gamma, \delta) \delta^{-\varphi(n)} \Phi_n(\delta, \gamma) \gamma^{-\varphi(n)} \Big)^e \nonumber \\
    &= \big(\Phi_n(\gamma, \delta) \Phi_n(\delta, \gamma)\big)^e = \pm \Phi_n(\gamma, \delta)^{2e} , \nonumber
\end{align}
where $e := 1$ if $4 \nmid m$, and $e := 2$ if $4 \mid m$.

Suppose that~\ref{ite:a} holds.
By Lemma~\ref{lem:res}, we have that $p \mid R_m^+$ and $p \equiv 1 \pmod m$.
Hence, from~\eqref{equ:RR} and the fact that $n \mid m$, we get that $p \mid \Phi_n(\gamma, \delta)$ and $p \equiv 1 \pmod n$.
Therefore, Lemma~\ref{lem:lehmer}\ref{ite:prim_equiv} implies that $p$ is a primitive divisor of $u_n(\gamma, \delta)$, and~\ref{ite:prim} follows.

Now suppose that~\ref{ite:prim} holds.
Thus, from Lemma~\ref{lem:lehmer}\ref{ite:prim_equiv}, it follows that $p \mid \Phi_n(\gamma, \delta)$.
Consequently, by~\eqref{equ:RR}, we get that either $p \mid R_m^+$ or $p \mid R_m^-$.
In the first case,~\ref{ite:a} follows immediately from Lemma~\ref{lem:res}.
In the second case, from Lemma~\ref{lem:res} it follows that there exists $b \in \mathbb{Z}$ such that $p \mid f(-b)$ and $\ord_p(b) = m$.
Since $f$ is quadratic and has a root modulo $p$, we have that $f$ splits completely modulo $p$.
Let $a \in \mathbb{Z}$ be such that $f(X) \equiv (X - a)(X + b) \pmod p$.
Recalling that $\alpha\beta = -1$, we get that $ab \equiv 1 \pmod p$, and consequently $\ord_p(a) = \ord_p(b) = m$.
Thus~\ref{ite:a} follows.

Now assume that $\alpha\beta = +1$.
Note that $\gamma\delta = 1$ and $(\gamma + \delta)^2 = \alpha + \beta + 2$ are nonzero coprime integers and $\gamma / \delta = \alpha$ is not a root of unity, so that $(u_k(\gamma, \delta))_{k \geq 0}$ is a Lehmer sequence.
Since $\gamma / \delta = \alpha$ and $\delta / \gamma = \beta$, we have that
\begin{align}\label{equ:R}
\Res(f, \Phi_m) &= \Phi_m(\alpha) \Phi_m(\beta) = \Phi_m(\gamma / \delta) \Phi_m(\delta / \gamma)
 = \Phi_m(\gamma, \delta) \delta^{-\varphi(m)} \Phi_m(\delta, \gamma) \gamma^{-\varphi(m)} \\
    &= \Phi_m(\gamma, \delta) \Phi_m(\delta, \gamma) = \pm \Phi_m(\gamma, \delta)^2 = \pm \Phi_n(\gamma, \delta)^2 . \nonumber
\end{align}

Suppose that~\ref{ite:a} holds.
From Lemma~\ref{lem:res} and~\eqref{equ:R}, it follows that $p \mid \Phi_n(\gamma, \delta)$ and $p \equiv 1 \pmod m$.
Hence, Lemma~\ref{lem:lehmer}\ref{ite:prim_equiv} yields that~\ref{ite:prim} holds.

Now suppose that~\ref{ite:prim} holds.
Then Lemma~\ref{lem:lehmer}\ref{ite:prim_equiv} and~\eqref{equ:R} give that $p \mid \Res(f, \Phi_m)$.
Consequently, by Lemma~\ref{lem:res}, we get that~\ref{ite:a} holds.

The proof of the equivalence of~\ref{ite:a} and~\ref{ite:prim} is complete.
Let us prove that each of~\ref{ite:suff1}, \ref{ite:suff2}, and~\ref{ite:suff3} implies~\ref{ite:prim} (and consequently also~\ref{ite:a}).

Suppose that~\ref{ite:suff1} holds.
Since $4 \nmid m$, $m \notin \{3, 6\}$, and $u_1(\gamma, \delta) = u_2(\gamma, \delta) = 1$, we have that $n \geq 5$ and $2 \nmid n$.
Hence, by Lemma~\ref{lem:lehmer}\ref{ite:prim_odd} it follows that $p \equiv \Big(\tfrac{\gamma\delta(\gamma - \delta)^2}{p}\Big) \equiv 1 \pmod {2n}$, since $\gamma\delta = 1$ and $\gamma - \delta = \alpha + \beta$ is an integer.
Then from $m \mid 2n$ we get that $p \equiv 1 \pmod m$, and~\ref{ite:prim} follows.

Suppose that~\ref{ite:suff2} holds. 
We have that $4 \mid n$, $\gamma\delta = 1$, $\gamma - \delta = \alpha + \beta \in \mathbb{Z}$, $p$ is a primitive divisor of $u_n(\gamma, \delta)$, and $p \equiv 1 \pmod 4$.
From Lemma~\ref{lem:lehmer}\ref{ite:prim_4} it follows that $p \equiv 1 \pmod {2n}$, i.e., $p \equiv 1 \pmod m$, and~\ref{ite:prim} follows.

Suppose that~\ref{ite:suff3} holds.
Then by Lemma~\ref{lem:lehmer}\ref{ite:prim_equiv}, we have that either $p \equiv 1 \pmod m$ or $p \equiv -1 \pmod m$.
Since $4 \mid m$ and $p \equiv 1 \pmod 4$, the second case is impossible.
Therefore, $p \equiv 1 \pmod m$ and~\ref{ite:prim} follows.

The proof is complete.

\section{Proof of Theorem~\ref{thm:odd}}

Let $f, \alpha, \beta, \delta, \gamma$ be as in Theorem~\ref{thm:alphabeta} with $\alpha\beta = -1$, let $g \in \mathbb{Z}[X]$ be a monic polynomial with $f \mid g$, and let $m \geq 7$ be an integer with $m \neq 10$ and $4 \nmid m$.
Since $\gamma\delta = 1$ and $m / (m, 2) \notin \{1, 2, 3, 4, 5, 6, 10, 12\}$, from Lemma~\ref{lem:gammadelta1} we get that $u_{m / (m, 2)}(\gamma, \delta)$ has a primitive divisor $p$.
Hence, from the implication \ref{ite:suff1}$\Rightarrow$\ref{ite:a} of Theorem~\ref{thm:alphabeta} and from Lemma~\ref{lem:criterion}, it follows that $m \in \mathfrak{M}(g)$.
The proof is complete.

\section{Proof of Theorem~\ref{thm:specialD0}}

Let $f, \alpha, \beta, \delta, \gamma$ be as in Theorem~\ref{thm:alphabeta} and let $g \in \mathbb{Z}[X]$ be a monic polynomial with $f \mid g$.
Also, write $(\alpha - \beta)^2 = D_0 D_1^2$, where $D_0, D_1 \in \mathbb{Z}$ and $D_0$ is squarefree, and suppose that $D_0 \geq 5$ and $D_0 \equiv 1 \pmod 4$.

First, assume that $\alpha\beta = -1$.
Hence, we have that $(\gamma^2 - \delta^2)^2 = (\alpha^2 - \beta^2)^2 = D_0 (D_1(\alpha + \beta))^2$, where $D_1(\alpha + \beta)$ is an integer.
Let $m$ be a positive integer with $8D_0 \mid m$.
Since $m / 2 \geq 20$, from Lemma~\ref{lem:gammadelta1} and Lemma~\ref{lem:lehmer}\ref{ite:p2p} and~\ref{ite:r2}, it follows that $u_{m/2}(\gamma, \delta)$ has an odd primitive divisor $p$.
Furthermore, Lemma~\ref{lem:1mod4} yields that $p \equiv 1 \pmod 4$.
Hence,~\ref{ite:suff2} holds and, by Theorem~\ref{thm:alphabeta} and Lemma~\ref{lem:criterion}, we get that $m \in \mathfrak{M}(g)$.

Now assume that $\alpha\beta = +1$.
Hence, we have that $(\gamma^2 - \delta^2)^2 = (\alpha - \beta)^2 = D_0 D_1^2$.
Let $m$ be a positive integer with $4D_0 \mid m$.
Since $m \geq 20$, from Lemma~\ref{lem:gammadelta1} and Lemma~\ref{lem:lehmer}\ref{ite:p2p} and~\ref{ite:r2}, it follows that $u_{m}(\gamma, \delta)$ has an odd primitive divisor $p$.
Furthermore, Lemma~\ref{lem:1mod4} yields that $p \equiv 1 \pmod 4$.
Hence,~\ref{ite:suff3} holds and, by Theorem~\ref{thm:alphabeta} and Lemma~\ref{lem:criterion}, we get that $m \in \mathfrak{M}(g)$.

The proof is complete.

\section{Further remarks}

For the sake of completeness, we also include the case in which $g$ has a linear factor.

\begin{proposition}
Let $g \in \mathbb{Z}[X]$ be a nonconstant monic polynomial with an integer root $a \notin \{-1, 0, +1\}$.
Then every positive integer $m$ belongs to $\mathfrak{M}(g)$, with the possible exception of $m = 2$ if $a = \pm 2^v - 1$ for some positive integer $v$, $m = 3$ if $a = -2$, and $m = 6$ if $a = 2$.
\end{proposition}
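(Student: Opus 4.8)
The plan is to exploit the linear factor $f(X) := X - a$ of $g$, available because $a$ is an integer root of $g$, together with Lemma~\ref{lem:criterion}. For this $f$, reading condition~\ref{ite:a} with the free integer of that lemma taken to be $a$ itself — so that $p \mid f(a) = 0$ holds automatically — the condition collapses to the existence of a prime $p$ with $\ord_p(a) = m$. Hence, apart from the value $m = 1$, which lies in $\mathfrak{M}(g)$ for every $g$ since reducing any recurrence modulo $M = 1$ leaves a single residue, it suffices to produce, for each $m$ outside the announced list, a prime of multiplicative order $m$ to the base $a$.

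I would first recall the classical equivalence that, for $p \nmid a$, one has $\ord_p(a) = m$ if and only if $p \mid \Phi_m(a)$ and $p \nmid m$, together with the facts that at most one prime divides both $\Phi_m(a)$ and $m$ and that, as soon as $m \geq 3$, it does so only to the first power. For $m \geq 3$ these show that a prime of order $m$ is missing exactly when every prime factor of $\Phi_m(a)$ divides $m$, in which case $\Phi_m(a) \in \{\pm 1, \pm p\}$ and so $|\Phi_m(a)| \leq m$; it therefore suffices to prove $|\Phi_m(a)| > m$ off a short list, the cases $m = 1, 2$ being read directly from $\Phi_1(a) = a - 1$ and $\Phi_2(a) = a + 1$. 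For $|a| \geq 3$ the elementary estimate $|\Phi_m(a)| \geq (|a| - 1)^{\varphi(m)} \geq 2^{\varphi(m)}$ gives $|\Phi_m(a)| > m$ for all $m \geq 5$ with $m \neq 6$ after a brief check, while $m \in \{3, 4, 6\}$ are settled by inspecting $\Phi_3, \Phi_4, \Phi_6$; the only surviving degeneracy for $|a| \geq 3$ is $m = 2$ with $a + 1 = \pm 2^v$, that is, $a = \pm 2^v - 1$.

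The case $|a| = 2$ with $m \geq 3$ is where the estimate collapses and which I expect to be the main obstacle. Here I would invoke Zsygmondy's theorem on primitive prime divisors of $2^n - 1$, whose only relevant exception is $n = 6$, to settle $a = 2$, and fold the base $a = -2$ onto $a = 2$ through the reflection identities $\Phi_m(-X) = \Phi_{2m}(X)$, $\Phi_{m/2}(X)$, or $\Phi_m(X)$ according as $m$ is odd, $m \equiv 2 \pmod 4$, or $4 \mid m$. The only degeneracies that emerge are $m = 6$ for $a = 2$ and $m = 3$ for $a = -2$, which are exactly the genuine exceptions recorded in the statement.

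Finally, two degeneracies found above are only apparent and must be recovered by hand. The value $m = 1$ lies in $\mathfrak{M}(g)$ by the modulus-$1$ remark of the first paragraph. And $a = -2$ with $m = 2$ — degenerate since $\Phi_2(-2) = -1$, yet absent from the exceptional list because $-2 \neq \pm 2^v - 1$ for any positive $v$ — is recovered by taking the recurrence $s_n = (-2)^n$, legitimate for characteristic polynomial $g$ because $-2$ is a root of $g$: modulo $M = 6$ it reads $1, 4, 4, 4, \dots$, hence has exactly two residues, so $2 \in \mathfrak{M}(g)$. Keeping these apparent obstructions separate from the genuine exceptions $(m, a) = (3, -2)$ and $(6, 2)$ is the delicate bookkeeping the argument requires.
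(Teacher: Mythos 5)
Your proof is correct, and it is in fact more careful than the paper's own argument. The paper's proof is essentially two lines: $1 \in \mathfrak{M}(g)$ trivially, then a single appeal to Zsigmondy's theorem asserting that for every pair $(a,m)$ outside the stated exceptional set there is a prime $p$ with $\ord_p(a) = m$, after which Lemma~\ref{lem:criterion} with $f(X) = X - a$ concludes. You follow the same skeleton (the reduction via Lemma~\ref{lem:criterion}, and the modulus-$1$ observation for $m = 1$, are identical), but instead of invoking Zsigmondy for an arbitrary integer base you reprove the needed existence statement elementarily for $|a| \geq 3$ via the bound $|\Phi_m(a)| \geq (|a|-1)^{\varphi(m)}$, keep Zsigmondy only for the base $2$, and transfer to $-2$ through the reflection identities for cyclotomic polynomials. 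The decisive difference is your last paragraph: the pair $(a,m) = (-2,2)$ is a genuine exception to the order-existence claim, since $\Phi_2(-2) = -1$ and hence no prime at all satisfies $\ord_p(-2) = 2$; yet $-2$ is not of the form $\pm 2^v - 1$ with $v \geq 1$, so this pair lies outside the paper's exceptional set and the paper's Zsigmondy sentence is false for it. In other words, the paper's proof as written does not establish $2 \in \mathfrak{M}(g)$ when $a = -2$, whereas your explicit sequence $s_n = (-2)^n$ — a legitimate recurrence with characteristic polynomial $g$ because $g(-2) = 0$ — has exactly the two residues $1, 4$ modulo $6$ and supplies the missing case. So the two routes share the same core, but what the paper's wholesale citation buys in brevity, your more hands-on treatment of the Zsigmondy input buys in completeness: it detects, and repairs, a case the paper silently gets wrong.
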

\begin{proof}
It is clear that $1 \in \mathfrak{M}(g)$.
By Zsigmondy's theorem~\cite[p.~1]{MR143728}, for every integer $a \notin \{-1, 0, +1\}$ and for every positive integer $m$ with $(a, m) \notin \{(\pm 2^v - 1, 2) : v \geq 1\} \cup \{(-2,3), (2, 6)\}$, there exists a prime number $p$ such that $\ord_p(a) = m$.
Hence, by Lemma~\ref{lem:criterion} with $f(X) = X - a$, we get that $m \in \mathfrak{M}(g)$.
\end{proof}

\bibliographystyle{amsplain}
\bibliography{temp}

\providecommand{\bysame}{\leavevmode\hbox to3em{\hrulefill}\thinspace}
\providecommand{\MR}{\relax\ifhmode\unskip\space\fi MR }
\providecommand{\MRhref}[2]{%
  \href{http://www.ams.org/mathscinet-getitem?mr=#1}{#2}
}
\providecommand{\href}[2]{#2}
\begin{thebibliography}{10}

\bibitem{MR2289425}
M.~Abouzaid, \emph{Les nombres de {L}ucas et {L}ehmer sans diviseur primitif},
  J. Th\'{e}or. Nombres Bordeaux \textbf{18} (2006), no.~2, 299--313.

\bibitem{MR3068231}
B.~Avila and Y.~Chen, \emph{On moduli for which the {L}ucas numbers contain a
  complete residue system}, Fibonacci Quart. \textbf{51} (2013), no.~2,
  151--152.

\bibitem{MR1863855}
Yu. Bilu, G.~Hanrot, and P.~M. Voutier, \emph{Existence of primitive divisors
  of {L}ucas and {L}ehmer numbers}, J. Reine Angew. Math. \textbf{539} (2001),
  75--122, With an appendix by M. Mignotte.

\bibitem{MR369240}
R.~T. Bumby, \emph{A distribution property for linear recurrence of the second
  order}, Proc. Amer. Math. Soc. \textbf{50} (1975), 101--106.

\bibitem{MR294238}
S.~A. Burr, \emph{On moduli for which the {F}ibonacci sequence contains a
  complete system of residues}, Fibonacci Quart. \textbf{9} (1971), no.~5,
  497--504, 526.

\bibitem{MR3298566}
A.~Dubickas and A.~Novikas, \emph{Linear recurrence sequences without zeros},
  Czechoslovak Math. J. \textbf{64(139)} (2014), no.~3, 857--865.

\bibitem{MR4125906}
A.~Dubickas and A.~Novikas, \emph{Recurrence with prescribed number of
  residues}, J. Number Theory \textbf{215} (2020), 120--137.

\bibitem{MR2468478}
A.~Flatters, \emph{Primitive divisors of some {L}ehmer-{P}ierce sequences}, J.
  Number Theory \textbf{129} (2009), no.~1, 209--219.

\bibitem{MR2024599}
T.~Herendi, \emph{Uniform distribution of linear recurring sequences modulo
  prime powers}, Finite Fields Appl. \textbf{10} (2004), no.~1, 1--23.

\bibitem{MR1502953}
D.~H. Lehmer, \emph{An extended theory of {L}ucas' functions}, Ann. of Math.
  (2) \textbf{31} (1930), no.~3, 419--448.

\bibitem{MR1503118}
D.~H. Lehmer, \emph{Factorization of certain cyclotomic functions}, Ann. of
  Math. (2) \textbf{34} (1933), no.~3, 461--479.

\bibitem{MR1119645}
H.~Niederreiter, A.~Schinzel, and L.~Somer, \emph{Maximal frequencies of
  elements in second-order linear recurring sequences over a finite field},
  Elem. Math. \textbf{46} (1991), no.~5, 139--143.

\bibitem{MR1503584}
T.~A. Pierce, \emph{The numerical factors of the arithmetic forms
  {$\prod_{i=1}^n(1\pm\alpha_i^m)$}}, Ann. of Math. (2) \textbf{18} (1916),
  no.~2, 53--64.

\bibitem{MR143728}
A.~Schinzel, \emph{On primitive prime factors of {$a^{n}-b^{n}$}}, Proc.
  Cambridge Philos. Soc. \textbf{58} (1962), 555--562.

\bibitem{MR1117027}
A.~Schinzel, \emph{Special {L}ucas sequences, including the {F}ibonacci
  sequence, modulo a prime}, A tribute to {P}aul {E}rd\H{o}s, Cambridge Univ.
  Press, Cambridge, 1990, pp.~349--357.

\bibitem{MR951911}
L.~Somer, \emph{Primes having an incomplete system of residues for a class of
  second-order recurrences}, Applications of {F}ibonacci numbers ({S}an {J}ose,
  {CA}, 1986), Kluwer Acad. Publ., Dordrecht, 1988, pp.~113--141.

\bibitem{MR1393478}
L.~Somer, \emph{Distribution of residues of certain second-order linear
  recurrences modulo {$p$}. {III}}, Applications of {F}ibonacci numbers, {V}ol.
  6 ({P}ullman, {WA}, 1994), Kluwer Acad. Publ., Dordrecht, 1996, pp.~451--471.

\bibitem{MR3696266}
L.~Somer and M.~K\v{r}\'{\i}\v{z}ek, \emph{On moduli for which certain
  second-order linear recurrences contain a complete system of residues modulo
  {$m$}}, Fibonacci Quart. \textbf{55} (2017), no.~3, 209--228.

\bibitem{MR922449}
P.~Stevenhagen, \emph{On {A}urifeuillian factorizations}, Nederl. Akad.
  Wetensch. Indag. Math. \textbf{49} (1987), no.~4, 451--468.

\bibitem{MR491445}
C.~L. Stewart, \emph{On divisors of {F}ermat, {F}ibonacci, {L}ucas, and
  {L}ehmer numbers}, Proc. London Math. Soc. (3) \textbf{35} (1977), no.~3,
  425--447.

\end{thebibliography}

\end{document}